\numberwithin{equation}{section}
\newcommand{\texorpdfstring}[2]{#1}
\newcommand{\de}{{\delta}}
\renewcommand{\H}{\mathbb H}
\newcommand{\ep}{{\epsilon}}
\newcommand{\la}{{\lambda}}
\newcommand{\G}{{\mathbb G}}
\newcommand{\f}{{\phi}}
\newcommand{\R}{\mathbb R}
\newcommand{\cH}{\mathcal H}
\newcommand{\res}{\mathop{\hbox{\vrule height 7pt width .5pt depth 0pt
\vrule height .5pt width 6pt depth 0pt}}\nolimits}
\newcommand{\B}{\mathbb B}
\newcommand{\N}{\mathbb N}
\newcommand{\Shaus}{{\mathcal S}}
\newcommand{\Haus}{\mathcal H}
\renewcommand{\leq}{\leqslant}
\renewcommand{\geq}{\geqslant}
\newcommand{\sgn}{\mathrm{sgn}}
\newcommand{\al}{\alpha}
\newcommand{\be}{\beta}
\newcommand{\cG}{{\mathcal G}}
\newcommand{\ra}{\rightarrow}
\def\@eqnnum{\hbox to .01\p@{}\rlap{\reset@font\rm
        \hskip -13.4cm(\theequation)}}
\newcommand{\der}{\partial}
\newcommand{\Om}{\Omega}
\newcommand{\wt}{\widetilde}
\newcommand{\Si}{{\Sigma}}
\newcommand{\SO}{{\Sigma_\ast}}
\newcommand{\SC}{{\Sigma_c}}
\newcommand{\SCr}{{\Sigma_{c}}}
\newcommand{\SCAr}{{\Sigma_{c}^{A}}}
\newcommand{\SCBr}{{\Sigma_{c}^{B}}}
\newcommand{\bj}{{\bar\jmath}}
\newcommand{\bo}{{\textrm{Box}}}
\newcommand{\SCA}{{\Sigma_{c}^{A}}}
\newcommand{\SCB}{{\Sigma_{c}^{B}}}
\newtheorem{The}{Theorem}[section]
\newtheorem{Lem}[The]{Lemma}
\newtheorem{Pro}[The]{Proposition}
\newtheorem{Def}[The]{Definition}
\newtheorem{rem}[The]{Remark}
\newtheorem{cor}[The]{Corollary}
\begin{document}

\title
[Transversal submanifolds in stratified groups]
{On transversal submanifolds and their measure}
\author{Valentino Magnani}
\address{Dipartimento di Matematica, Universit\`a di Pisa, Largo Bruno Pontecorvo 5, 56127, Pisa, Italy}
\email{magnani@dm.unipi.it}
\author{Jeremy T. Tyson}
\address{Department of Mathematics, University of Illinois, 1409 West Green St., Urbana, IL 61801 USA}
\email{tyson@math.uiuc.edu}

\author{Davide Vittone}
\address{Dipartimento di Matematica, Universit\`a di Padova, via Trieste 63, 35121 Padova, Italy}
\email{vittone@math.unipd.it}

\thanks{The first author acknowledges the support of the European Project ERC AdG *GeMeThNES*.}
\thanks{The second author acknowledges the support of the US National Science Foundation Grants DMS-0901620 and DMS-1201875.}
\thanks{The third author acknowledges the support of MIUR, GNAMPA of INDAM (Italy), University of Padova, Fondazione CaRiPaRo Project ``Nonlinear Partial Differential Equations: models, analysis, and control-theoretic problems'' and University of Padova research project ``Some analytic and differential geometric aspects in Nonlinear Control Theory, with applications to Mechanics''.}
\subjclass[2010]{Primary 53C17; Secondary 22E25, 28A78}
\keywords{Stratified groups, submanifolds, Hausdorff measure.}
\date{\today}


\maketitle


\medskip

\begin{quote}
{\small {\bf Abstract:}
We study the class of transversal submanifolds. We characterize their blow-ups at transversal points and prove a  negligibility theorem for their ``generalized characteristic set'', with respect to the Carnot-Carath\'eodory Hausdorff measure. This set is made by all points of non-maximal degree. Observing that $C^1$ submanifolds in Carnot groups are generically transversal, the previous results prove that the ``intrinsic measure'' of $C^1$ submanifolds is generically equivalent to their Carnot-Carath\'eodory Hausdorff measure. As a result, the restriction of this Hausdorff measure to the submanifold can be replaced by a more manageable integral formula, that should be seen as a ``sub-Riemannian mass''. Another consequence of these results is an explicit formula, only depending on the embedding of the submanifold, that computes the Carnot-Carath\'eodory Hausdorff dimension of $C^1$ transversal submanifolds.}
\end{quote}

\tableofcontents


%
%
%
%
%
\section{Introduction}
%
%
%
%
%

A stratified group $\G$ is a connected, simply connected nilpotent Lie group, whose Lie algebra
$\cG$ has a special grading that allows for the existence of natural dilations along with
a homogeneous distance that respect both dilations and group operation.
The first developments of Geometric Analysis in the non-Riemannian framework of stratified groups
were mainly focused on geometric properties of domains in relation with Sobolev embeddings (see, for instance,
\cite{CDG}, \cite{FGW}, \cite{GN}), problems from the calculus of variations (e.g., \cite{BSCV07}, \cite{DGN3}),
differential geometric calculus on hypersurfaces (\cite{DGN5}), and the structure of finite perimeter sets (a very
incomplete list of references includes \cite{Amb1}, \cite{CG},  \cite{DGN1},  \cite{FSSC4}, \cite{FSSC5}, and \cite{Mag5}).
The preceding lists of references are far from exhaustive, representing only a small sample of the rapidly expanding
literature in the field of sub-Riemannian geometric analysis.

The study of finite perimeter sets and domains naturally connects with the study of hypersurfaces
and their Hausdorff measure. The fact that this measure is constructed by a fixed homogeneous distance
of the group is understood.
An important object in this context is the so-called $\G$-perimeter measure. It can be defined
using a volume measure and a smooth left invariant metric on the horizontal subbundle of the group.
This measure is equivalent, in the sense of \eqref{equivalence} below, to the $(Q-1)$-dimensional Hausdorff measure
either of the reduced boundaries in step two Carnot groups, \cite{FSSC5}, or of the topological boundaries of $C^1$
smooth domains in arbitrary stratified groups, \cite{Mag5}, where $Q$ is the Hausdorff dimension of $\G$.

The $\G$-perimeter measure for regular sets has a precise integral formula that replaces the Hausdorff measure
and that does not contain the homogeneous distance. In fact, it is more manageable for minimization problems.
In the development of Geometric Measure Theory on stratified groups, a natural question arises: what is
the ``right measure'' replacing the $\G$-perimeter measure for higher codimensional sets?

In \cite{MV}, a general integral formula for the ``intrinsic measure'' of $C^1$ submanifolds has been
found: let $\Sigma$ be a $C^1$ smooth submanifold of $\G$ and define
\begin{equation}\label{intrmeas}
\mu_\Si(U)=\int_{\Phi^{-1}(U)}\|(\der_{t_1}\Phi\wedge\cdots\wedge\der_{t_p}\Phi)_{D,\Phi(t)}\|\,dt
\end{equation}
where $\Phi:A\to U\subset\Sigma$ is a local parametrization of $\Sigma$, $A$ is an open set of $\R^p$
and $D$ is the degree of $\Sigma$, see Subsection~\ref{Subsect:Deg} for more details.
This measure yields the perimeter measure in codimension one and in several cases it is equivalent to
${\mathcal H}^D\res\Sigma$ up to geometric constants, where $D$ is the Hausdorff dimension of $\Sigma$, namely,
\begin{equation}\label{equivalence}
C^{-1}\; \cH^D\res\Si\leq \mu_\Si  \leq C\;\cH^D\res\Si\,.
\end{equation}
This equivalence already appears in \cite{MV} for $C^{1,1}$ smooth submanifolds in stratified groups,
under the key assumption that points of degree less than $D$ are ${\mathcal H}^D$-negligible.
Under this assumption, the equivalence \eqref{equivalence} is a consequence of a ``blow-up theorem''
performed at each point of degree $D$, see \cite[Theorem~1.1]{MV}.
For more details on the notion of degree, see Subsection~\ref{Subsect:Deg}.

The previously mentioned ${\mathcal H}^D$-negligibility condition holds in many cases: for $C^{1,1}$ smooth submanifolds in two step stratified groups \cite{Mag12B}, and in the Engel group \cite{LeDMag16}, for $C^1$ smooth non-horizontal submanifolds in all stratified groups \cite{Mag5,Mag12A}, and for $C^1$ smooth curves in all groups \cite{KorMag}.
In all these cases, the equivalence \eqref{equivalence} holds. In fact, when $\Sigma$ is $C^{1,1}$
this is a consequence of the blow-up theorem of \cite{MV}, while for the case of $C^1$ smoothness
the blow-up at points of degree $D$ is established in \cite{Mag12A} for non-horizontal submanifolds
and in \cite{KorMag} for all curves.

Surprisingly, for $C^1$ smooth submanifolds in stratified groups the equivalence \eqref{equivalence}
is an intriguing open question. One of the reasons behind this new difficulty is that, in higher codimension,
submanifolds may belong to different classes, namely, they may have different Hausdorff dimensions,
while keeping the same topological dimension. Simple examples of this phenomenon are given by the one dimensional homogeneous subgroups, that have different Hausdorff dimensions according to their degree. 
Clearly, analogous examples can be easily found for higher dimensional homogeneous subgroups. 
It is instructive to compare these cases with that of codimension one submanifolds,
whose Hausdorff dimension must equal $Q-1$.

Can we detect the ``right'' class of submanifolds that has the ``good behaviour'' of hypersurfaces, and replaces
them in higher codimension?  When the codimension is ``low'', precisely less than the dimension of
the {\em horizontal fibers}, this class is formed by non-horizontal submanifolds, for which \eqref{equivalence} holds,
\cite{Mag12A}. In higher codimension, this class is formed by {\em transversal submanifolds}.
A transversal submanifold is easily defined as a top-dimensional submanifold among all submanifolds
having the same topological dimension $p$. We have a precise formula for this maximal Hausdorff dimension $D(p)$, see
Section~\ref{Sect:preliminary} for precise definitions.

In this paper, we prove that transversal submanifolds in arbitrary codimension have properties
similar to those of hypersurfaces. In fact, our main result is that \eqref{equivalence} holds for all
$C^1$ smooth transversal submanifolds in arbitrary stratified groups.
This follows by combining two key results: a blow-up theorem and a negligibility result, that are stated below.
The estimates \eqref{equivalence} show in particular that the Hausdorff dimension of $C^1$ smooth transversal submanifolds
is equal to $D(p)$. This fact should be compared with \cite[0.6.B]{Gro96}, where M. Gromov provides a formula
for the Hausdorff dimension of generic submanifolds. Gromov also introduces the number $D_H(\Sigma)$ associated with
a submanifold $\Sigma$; this number coincides with the degree $d(\Sigma)$ introduced in Subsection~\ref{Subsect:Deg},
see \cite[Remark~2]{Mag12B} for a proof of this fact.

Another motivation for our study of transversal submanifolds is that $C^1$ smooth submanifolds are generically transversal, namely, ``most'' $C^1$ submanifolds are transversal. This suggests that these submanifolds are important in the subsequent study of higher codimensional submanifolds in Carnot groups. 
The fact that transversality is a generic property can be seen for instance as a simple consequence of our Lemma~\ref{goodtangentvectors} and then arguing as in \cite[Section~4]{Mag12A}. 

The main results of our work are a ``blow-up theorem'' and an $\cH^{D(p)}$-negligibility theorem
for all $C^1$ smooth transversal submanifolds. These theorems extend the blow-up theorem of \cite{Mag12A} and
the negligibility theorem of \cite{Mag5}.
%
%
%
%
\begin{The}[Blow-up theorem]\label{BlwTrsv}
If $\Sigma$ is a $C^1$ smooth submanifold and $x\in\Sigma$ is transversal, then, for every compact
neighbourhood $F$ of $0$, we have
\begin{equation}\label{eq:BlwTrsv}
F\cap\delta_{1/r}(x^{-1}\Sigma)\to F\cap\Pi_\Sigma(x)\quad\mbox{as}\quad r\to0^+
\end{equation}
where the convergence is in the sense of the Hausdorff distance between compact sets
and $\Pi_\Sigma(x)$ is a $p$-dimensional normal homogeneous subgroup of $\G$, having Hausdorff dimension equal to $D(p)$. Moreover, the following limit holds
\begin{equation}\label{eq:BlwTrsvMeas}
\lim_{r\to0^+}\frac{\tilde\mu\big(\Si\cap B(x,r)\big)}{r^{D(p)}}=
\frac{\theta^d_g\big(\big(\tau_\Sigma(x)\big)_{D(p),x}\big)}
{\|\big(\tau_{\Sigma,\tilde g}(x)\big)_{D(p),x}\|}\,.
\end{equation}
\end{The}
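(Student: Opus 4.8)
The plan is to reduce to the case $x=e$ and then realize the blow-up through an adapted description of $\Si$ over the candidate limit subgroup. All the objects involved---the homogeneous distance, the Hausdorff convergence in \eqref{eq:BlwTrsv}, the degree together with the projection onto the top-degree part, and the intrinsic measure $\tilde\mu$---are left invariant and compatible with the dilations $\delta_r$, so after replacing $\Si$ by $x^{-1}\Si$ we may assume that $e$ is the transversal point. Fix a basis $X_1,\dots,X_n$ adapted to the stratification, with $X_i$ of degree $d_i$, and a $C^1$ local parametrization $\Phi$ with $\Phi(0)=e$. Using transversality together with Lemma~\ref{goodtangentvectors}, I would choose the tangent vectors $v_j=\der_{t_j}\Phi(0)$ so that each $v_j$ has a well-defined degree $\delta_j$ (its top nonzero layer), with $\sum_j\delta_j=D(p)$, and so that the principal parts of the $v_j$ (their components in the respective layers $\delta_j$) are linearly independent; their span $\mathfrak h$ is then the ``greedy'' subspace obtained by filling the grading from the top layer downward. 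Since $\mathfrak h$ contains the highest layers of $\cG$, the bracket relations force $\mathfrak h$ to be an ideal, so that $\Pi_\Si(e):=\exp\mathfrak h$ is a \emph{normal} homogeneous subgroup of topological dimension $p$ and homogeneous (hence Hausdorff) dimension $\sum_j\delta_j=D(p)$, as required.

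For the set convergence \eqref{eq:BlwTrsv} I would introduce the anisotropic reparametrization $R_r(\xi)_j=r^{\delta_j}\xi_j$ and study $\Phi_r(\xi):=\delta_{1/r}\big(\Phi(R_r\xi)\big)$. A first-order expansion of each coordinate $\phi_i$, using $\phi_i(0)=0$ together with $\der_{t_j}\phi_i(0)=0$ whenever $d_i>\delta_j$ (which holds by the very definition of $\delta_j$), shows that the coordinates indexed by $X_i\in\mathfrak h$ converge to linear functions of $\xi$ spanning $\mathfrak h$, while all remaining coordinates tend to $0$. The subtle coordinates are the high-degree ones, where the factor $r^{-d_i}$ multiplies a remainder that the naive $C^1$ estimate $o(|R_r\xi|)$ fails to control as soon as $\Si$ has tangent directions of different degrees. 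To handle them I would exploit the graph structure coming from transversality: splitting $\G=\Pi_\Si(e)\cdot\W$ with $\W$ a complementary homogeneous subgroup, one parametrizes $\Si$ near $e$ by $\Pi_\Si(e)$ itself, writing it as $\pi\mapsto\pi\cdot w(\pi)$ with $w$ valued in $\W$ and $w(e)=e$. Then the rescaled map takes the form $\pi\mapsto\pi\cdot\delta_{1/r}w(\delta_r\pi)$, and convergence to $\Pi_\Si(e)$ reduces to the single estimate $\delta_{1/r}w(\delta_r\pi)\to e$ uniformly on compact sets. This estimate is the analytic core of the theorem and the main obstacle; I would establish it by differentiating along $\Si$ and applying the mean value theorem to the layers of $w$ in increasing order of degree, exactly in the spirit of the non-horizontal case of \cite{Mag12A} and the curve case of \cite{KorMag}. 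Uniform convergence of $\Phi_r$ on compact sets then yields the two-sided Hausdorff convergence in \eqref{eq:BlwTrsv}.

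For the measure formula \eqref{eq:BlwTrsvMeas} I would start from the integral representation of $\tilde\mu$ and change variables through $R_r$, whose Jacobian equals $\prod_j r^{\delta_j}=r^{D(p)}$. Rewriting the condition $\Phi(R_r\xi)\in B(e,r)$ as $\Phi_r(\xi)\in B(e,1)$ by homogeneity of the distance, one obtains
\begin{equation*}
\frac{\tilde\mu\big(\Si\cap B(e,r)\big)}{r^{D(p)}}
=\int_{\{\xi:\,\Phi_r(\xi)\in B(e,1)\}}
\big\|\big(\der_{t_1}\Phi\wedge\cdots\wedge\der_{t_p}\Phi\big)_{D(p),\Phi(R_r\xi)}\big\|\,d\xi .
\end{equation*}
As $r\to0^+$ the integrand converges, by continuity of the tangent $p$-vector and of the projection onto the degree-$D(p)$ part (which in particular vanishes at the points of non-maximal degree), to the constant $\big\|\big(\tau_{\Si,\tilde g}(e)\big)_{D(p),e}\big\|$, while the domains of integration converge to the portion of $\Pi_\Si(e)$ inside $B(e,1)$ read in the $\xi$ coordinates. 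A dominated convergence argument, whose equi-integrability is supplied by the uniform convergence of $\Phi_r$ from the previous step, then produces a finite limit; identifying the resulting geometric constant with the Hausdorff density of the homogeneous subgroup $\Pi_\Si(e)$, as encoded in the definitions of $\theta^d_g$ and of the norm, yields exactly the ratio $\theta^d_g\big(\big(\tau_\Si(e)\big)_{D(p),e}\big)\big/\big\|\big(\tau_{\Si,\tilde g}(e)\big)_{D(p),e}\big\|$.
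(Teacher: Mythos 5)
Your overall architecture for \eqref{eq:BlwTrsv} --- reduction to $x=e$, identification of $\Pi_\Si(e)$ through Lemma~\ref{goodtangentvectors} (your normality and Hausdorff--dimension arguments are correct), the anisotropic rescaling $R_r$, and the passage to a graph over $\Pi_\Si(e)$ --- is sound and runs parallel to the paper, which realizes the graph in coordinates via the inverse mapping theorem and then works with the weighted reparametrization \eqref{etat}. The genuine gap is that the estimate you yourself single out as ``the analytic core of the theorem and the main obstacle,'' namely $\delta_{1/r}w(\delta_r\pi)\to e$ uniformly on compact sets, is never proved: you defer it to arguments ``in the spirit of'' \cite{Mag12A} and \cite{KorMag}. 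That estimate is exactly what the paper's proof spends its effort on (the rates \eqref{induct0}, obtained through \eqref{etat} and the structure \eqref{ailX} of the frame coefficients), so a proposal that leaves it as a gesture has not proved the theorem. Note, moreover, that at a transversal point the gap can be closed more cheaply than your appeal to the general machinery suggests: all components of $w$ have degree $\leq\ell$; for $d_i<\ell$ the Lipschitz bound $|w_i(\delta_r\pi)|=O(|\delta_r\pi|)=O(r^\ell)=o(r^{d_i})$ already suffices, while for the degree-$\ell$ components one uses that, by Corollary~\ref{chartransv} (the zero block in \eqref{matriceCtransv}), the differential of $w$ at $e$ annihilates the degree-$\ell$ directions of $\Pi_\Si(e)$ in those components; the mean value theorem plus continuity of the differential then gives $w_i(\delta_r\pi)=o(1)\,O(r^\ell)+O(r^{\ell+1})=o(r^\ell)$. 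Either this short argument or the paper's reparametrization must actually appear; without one of them the proof of \eqref{eq:BlwTrsv} is incomplete.

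The measure part contains an error, not merely a gap. The Riemannian surface measure $\tilde\mu$ has density $\|\der_{t_1}\Phi\wedge\cdots\wedge\der_{t_p}\Phi\|_{\tilde g}$, not the projected density $\|(\der_{t_1}\Phi\wedge\cdots\wedge\der_{t_p}\Phi)_{D(p),\Phi(R_r\xi)}\|$ that you inserted in your displayed identity; the latter is the density of the intrinsic measure \eqref{intrmeas}. With your integrand the limit would be $\theta^d_g\big((\tau_\Si(e))_{D(p),e}\big)$ times the norm of the degree-$D(p)$ projection of the \emph{unnormalized} tangent $p$-vector, which equals $1$ in your adapted parametrization, so the factor $1/\|(\tau_{\Si,\tilde g}(e))_{D(p),e}\|$ in \eqref{eq:BlwTrsvMeas} cannot arise; your claimed integrand limit $\|(\tau_{\Si,\tilde g}(e))_{D(p),e}\|$ is in fact inconsistent with your own integrand, which involves the unnormalized wedge product. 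The correct computation, as in the paper, uses the Riemannian density, which converges to $\|(\der_{y_1}\Phi\wedge\cdots\wedge\der_{y_p}\Phi)(0)\|_{\tilde g}$, while the rescaled domains converge to a set of Lebesgue measure $\theta^d_g\big((\tau_\Si(e))_{D(p),e}\big)$; one then concludes with the identity $\|(\tau_{\Si,\tilde g}(e))_{D(p),e}\|=1/\|(\der_{y_1}\Phi\wedge\cdots\wedge\der_{y_p}\Phi)(0)\|_{\tilde g}$, valid because in the adapted frame the degree-$D(p)$ projection of $(\der_{y_1}\Phi\wedge\cdots\wedge\der_{y_p}\Phi)(0)$ is a unit $g$-multivector. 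This step of your argument needs to be redone with the correct density.
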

The proof of this theorem is given in Section~\ref{Sect:Transv}. This section, along with
Section~\ref{Sect:preliminary}, also contains the definitions of the relevant notions.
It is worth to mention that in the case of $C^{1,1}$ regularity the blow-up at transversal points
is already contained in \cite{MV}. In our case, where $\Sigma$ is only $C^1$, 
the approach of \cite{MV} does not apply, so we follow the method used in \cite{KorMag} for curves. 
The point here is to provide a special ``weighted reparametrization'' of $\Sigma$ around the blow-up point, see \eqref{etat}. Our next main result is the following generalized negligibility theorem.
\begin{The}[Negligibility theorem]\label{theo:neglig}
Let $\Si\subset\G$ be a $p$-dimensional $C^1$ submanifold and let $\SC\subset\Si$ denote the subset of
points with degree less than $D(p)$. We have
\begin{equation}\label{eqnegl}
\Haus^{D(p)}(\SC)=0\,.
\end{equation}
\end{The}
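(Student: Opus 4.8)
The plan is to work locally and reduce the statement to a covering estimate for the image of the degeneracy locus of a single $C^1$ map. Since $\Si$ is covered by countably many charts and $\Haus^{D(p)}$ is countably subadditive, it suffices to fix a $C^1$ parametrization $\Phi\colon A\to\Si$, with $A\subseteq\R^p$ open, and graded exponential coordinates on $\G$ in which the homogeneous distance is comparable to the gauge $\max_j|x_j|^{1/d_j}$, where $d_j$ is the degree of the $j$-th coordinate. Writing $\tau(t)=\der_{t_1}\Phi(t)\wedge\cdots\wedge\der_{t_p}\Phi(t)$ in the left invariant frame and decomposing it into its homogeneous components by degree, the point $\Phi(t)$ is transversal exactly when the top-degree component $\tau_{(D(p))}(t)$ does not vanish. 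Hence the chart portion of $\SC$ is precisely $\Phi(E_0)$ with $E_0=\{t\in A:\tau_{(D(p))}(t)=0\}$, a relatively closed set on which the continuous (because $\Phi\in C^1$) field $\tau_{(D(p))}$ vanishes, and the goal becomes $\Haus^{D(p)}(\Phi(E_0))=0$.

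The core is an anisotropic covering adapted to the grading and to the degeneracy. At a point $t_0\in E_0$ the vanishing of $\tau_{(D(p))}(t_0)$ means that the top-degree part of the differential $D\Phi(t_0)$ drops rank; this produces a \emph{compressible direction} in $\R^p$ along which the top-degree coordinates of $\Phi$ have degenerate first-order variation. By uniform continuity of $D\Phi$ on compact subsets of $A$, the degeneracy is quantitative: over a cube of side $\sigma$ centred at $t_0$ the oscillation of the top-degree coordinates along the compressible direction is at most $\omega(\sigma)\,\sigma$, where $\omega$ is a modulus of continuity with $\omega(\sigma)\to0$ as $\sigma\to0^+$. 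I would then tile a neighbourhood of $E_0$ by boxes elongated in the compressible direction, choosing their side-lengths so that the graded oscillations $\max_j(\mathrm{osc}\,\Phi^j)^{1/d_j}$ are balanced in all directions; this is in the spirit of the weighted reparametrization \eqref{etat} used for the blow-up. Estimating the homogeneous diameter of each $\Phi(\text{box})$ by the gauge and summing $(\mathrm{diam})^{D(p)}$, the flatness in the compressible direction yields an extra factor $\omega(\sigma)$ compared with the transversal count, so the total content of the boxes meeting $E_0$ is bounded by $C\,\omega(\sigma)$ times a quantity uniformly bounded in $\sigma$, and it tends to $0$ as $\sigma\to0^+$. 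Since the covering scale also tends to $0$, this bounds the genuine Hausdorff measure and gives $\Haus^{D(p)}(\Phi(E_0))=0$. Equivalently, the estimate can be packaged as a localized one-sided area inequality $\Haus^{D(p)}(\Phi(E))\le C\int_{\overline{E}}\|\tau_{(D(p))}\|\,dt$ for $E$ in a neighbourhood of $E_0$, whose right-hand side vanishes on $E_0$.

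I expect the main obstacle to be the graded bookkeeping in the covering when the $p$ largest degrees are spread over several strata, or split a single stratum: one must identify the correct compressible direction(s) for the vector-valued condition $\tau_{(D(p))}=0$ and simultaneously balance oscillations across the different degrees $d_j$. A second, related technical point is that the homogeneous diameter of $\Phi(\text{box})$ is only comparable to the gauge of $\Phi(t_0)^{-1}\Phi(\cdot)$, whose higher-degree coordinates carry polynomial corrections coming from the group law; these must be shown to be of lower order and absorbed into the estimate. Finally, the decisive use of mere $C^1$ regularity is exactly that it supplies a vanishing modulus $\omega(\sigma)\to0$ — enough for the content to vanish in the limit — whereas the Taylor-type argument available under $C^{1,1}$ in \cite{MV} is out of reach here, which is why the covering must be driven by the modulus of continuity of $D\Phi$ rather than by a second-order expansion.
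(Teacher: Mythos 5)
Your overall strategy is indeed the one the paper uses: extract a quantitative flatness estimate from mere $C^1$ regularity (your modulus $\omega(\sigma)$ is the paper's $\ep$ in \eqref{eq5}), run an anisotropic covering adapted to the grading, and gain a vanishing factor in the ball count; you also correctly note that the $C^{1,1}$ Taylor argument of \cite{MV} is unavailable. But there is a genuine gap exactly at the point you set aside as ``graded bookkeeping''. The paper must split $\SC=\SCA\cup\SCB$ as in \eqref{defSCABr}: points whose tangent space is deficient in some stratum $\bj\geq\ell+1$ (case A), and points where all strata above $\ell$ are full but the stratum-$\ell$ part is deficient, $\al_\ell<r_p$ (case B; empty when $\ell=1$, but for instance for curves in the Heisenberg group, where $\ell=2$, it consists precisely of the horizontal points). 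Your single--compressible-direction count works, in essence, only in case A, where the normal covector is supported in strata $>\ell$ and therefore does not see the stratum-$\ell$ oscillations inside a CC ball of the natural radius $r^{1/\ell}$. In case B the normal covectors produced by the degeneracy necessarily have nonzero components in stratum $\ell$; inside one ball of radius $r^{1/\ell}$ the stratum-$\ell$ coordinates oscillate by order $\bigl(r^{1/\ell}\bigr)^{\ell}=r$, which swamps the thin-slab width $\ep r$ you want to exploit, so the gain of a factor $\omega$ is destroyed. The only way out is to shrink the covering radius to $r^\theta$ with $r^{\ell\theta-1}=\ep$ (the condition \eqref{eq:condlemma3} of Lemma~\ref{lemma3}), i.e.\ to tune the anisotropy to the modulus itself; but this shrinking inflates the number of balls needed by the factor $r^{-(\ell\theta-1)(n_\ell-r_p)}=\ep^{-(n_\ell-r_p)}$, which a single gain of $\ep$ cannot pay for whenever $n_\ell>r_p$.

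The paper closes case B by an ingredient your sketch does not contain: the degeneracy $\al_\ell<r_p$ forces the corank of the relevant graded projection to be $H=n_\ell-\al_\ell\geq n_\ell-r_p+1$, so there are $H$ \emph{independent} unit normals all supported in strata $\geq\ell$, the slab constraint holds simultaneously in all of them, and the resulting gain $\ep^H$ beats the loss $\ep^{-(n_\ell-r_p)}$ with one power of $\ep$ to spare (Steps 3--5 of Lemma~\ref{lemma3}, then Step 2 of Lemma~\ref{intermediatelemma}). So the resolution is not bookkeeping but a dimension count combined with the $\ep$-dependent choice of scale, and the theorem hinges on it. Two smaller remarks: the passage from Euclidean to homogeneous estimates (group-law corrections) that you flag is handled in the paper by Lemma~\ref{lem:xtilde} together with the BCH expansions, and is indeed of lower order, so that part of your plan is sound for case A; on the other hand, your proposed packaging $\Haus^{D(p)}(\Phi(E))\leq C\int_{\overline E}\|\tau_{(D(p))}\|\,dt$ cannot serve as an intermediate step, since near characteristic points it is equivalent to the hard half of \eqref{equivalence} and thus already contains the theorem --- it must be the output of the covering argument, not an input.
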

We refer to Section \ref{sec:neglig} for the definition of the {\em generalized characteristic set} $\SC$.
The proof of Theorem~\ref{theo:neglig} relies on covering arguments and a number of technical lemmata,
that aim to estimate the behaviour of the number of small balls covering the generalized characteristic set. 
The difficulty here is to properly translate the information on the lower degree of the points into concrete
estimates on the best ``local coverings'' around these points, see Lemma~\ref{lem:xtilde},
Lemma~\ref{lemma2} and Lemma~\ref{lemma3}. Theorem~\ref{theo:neglig} extends to Euclidean Lipschitz submanifolds using standard arguments, see Theorem \ref{theo:negligLip}. Arguing the same way, one also realizes that estimates \eqref{equivalence} extend to all Euclidean Lipschitz transversal submanifolds. 

The method to prove Theorem~\ref{theo:neglig} can also be used to establish new estimates on the Carnot-Carath\'eodory Hausdorff dimension of $\SC$ for $C^{1,\la}$ submanifolds in Carnot groups, where $0<\la\leq1$. These estimates, proved in Theorem~\ref{thmsizecharset}, show that the Carnot-Carath\'eodory Hausdorff dimension of $\SC$ can be estimated from above by a bound smaller than $D(p)$. Both Theorems~\ref{theo:neglig} and \ref{thmsizecharset} generalize some results proved, in the Heisenberg group framework, in the fundamental paper \cite{Bal03}, compare Remark~\ref{final-remark}. We do not know whether the estimates of Theorem \ref{thmsizecharset} are sharp. Even in Heisenberg groups, this sharpness seems to be an interesting open question. We refer to \cite{BPR} for results and open problems akin to that of estimating the size of $\SC$.

The validity of \eqref{equivalence} for a large class of submanifolds makes the intrinsic measure \eqref{intrmeas} a reasonable notion of ``sub-Riemannian mass''. This should be seen for instance in the perspective of studying special classes of isoperimetric inequalities, when either the filling current or the filling submanifold must be necessarily transversal, as it occurs for higher dimensional fillings in Heisenberg groups.

%
%
%
%
%
\section{Notation and preliminary results}\label{Sect:preliminary}
%
%
%
%
%
%
%
%
%
\subsection{Carnot groups and exponential coordinates}
Let us start with a brief introduction to stratified groups; we refer to \cite{FS} for more details on the subject.

Let $\G$ be a connected and simply connected Lie group with stratified Lie algebra
$ \cG=V_1\oplus\dots\oplus V_\iota $ of step $\iota$, satisfying the conditions
$V_{i+1}=[V_1,V_i]$ for every $i\geq 1$ and $V_{\iota+1}=\{0\}$. We set
\begin{equation}\label{m_j}
n_j:=\dim V_j\quad\text{and}\quad m_j:=n_1+\dots+n_j,\qquad j=1,\dots,\iota\,;
\end{equation}
we will also use $m_0:=0$. The degree $d_j$ of $j\in\{1,\dots,n\}$ is defined by the condition
\[
m_{d_j-1}+1\leq j\leq m_{d_j}.
\]
We denote by $n$ the dimension of $\cG$, therefore $n=m_\iota$.
We say that a basis $(X_1,\dots,X_n)$ of $\cG$ is {\em adapted to the stratification}, or in short {\em adapted}, if
\[
X_{m_{j-1}+1},\dots, X_{m_j}\text{ is a basis of $V_j$ for any $j=1,\dots, \iota$}.
\]
In the sequel, we will fix a {\em graded metric} $g$ on $\G$, namely, a left invariant Riemannian metric on $\G$ such that the subspaces $V_k$ are orthogonal.
\begin{Def}\rm
An adapted basis $(X_1,\ldots,X_n)$ of $\cG$ that is also orthonormal with respect to a
left invariant Riemannian metric is a {\em graded basis}.
\end{Def}
Clearly, the Riemannian metric in the previous definition must be necessarily graded.
When either an adapted or a graded basis is understood, we identify $\G$ with $\R^n$ by
the corresponding exponential coordinates of the first kind.

We use two different ways of denoting points $x$ of $\G$ with respect to fixed exponential coordinates of the first kind adapted to a graded basis of $\cG$.
We use both the standard notation with ``lower indices''
\[
x=(x_1,\dots, x_n)\in\R^n
\]
and the one with ``upper indices''
\[
x=(x^1,x^2,\dots,x^\iota)\in \R^{n_1}\times\R^{n_2}\times\dots\times\R^{n_\iota}\,,
\]
where clearly $x^j=(x_{m_{j-1}+1},\dots,x_{m_j})\in\R^{n_j}$ for all $j=1,\dots,\iota$.
By the Baker-Campbell-Hausdorff formula, the group law reads in coordinates as
\begin{equation}\label{BCHQ}
x\cdot y=x+y+Q(x,y)\,,
\end{equation}
for a suitable polynomial function $Q:\R^n\times\R^n\to\R^n$. Precisely, $Q=(Q_1,\dots, Q_n)$ can be written in the form
\begin{equation}\label{eq0.2}
Q_j(x,y)=\sum_{k,h:d_k<d_j,d_h<d_j}R_j^{kh}(x,y)(x_ky_h-x_hy_k)\quad\forall j=1,\dots,n
\end{equation}
for suitable polynomial functions $R_j^{kl}$. It follows that for any bounded set $K\subset\G$ there exists $C=C(K)>0$ such that
\begin{equation}\label{eq0.1}
|Q(x,y)|\leq C|x||y|\qquad\text{for any }x,y\in K\,.
\end{equation}
\subsubsection{Stratified groups as abstract vector spaces}
To emphasize some intrinsic notions on stratified groups, while preserving the ease of using a linear structure,
stratified groups can be also regarded as abstract vector spaces.
In fact, connected and simply connected nilpotent Lie groups are diffeomorphic to their Lie algebra through the exponential mapping $\exp:\cG\to\G$, that is an analytic diffeomorphism.
As a result, we equip the Lie algebra of $\G$ with a Lie group operation, given by the
Baker-Campbell-Hausdorff series, that makes this Lie group isomorphic to the original $\G$.
This allows us to consider $\G$ as an abstract linear space, equipped with a polynomial operation and
a grading $\G=H_1\oplus\cdots\oplus H_\iota$. Under this identification a graded basis
$(X_1,\ldots,X_n)$ becomes an orthonormal basis of $\G$ as a vector space, where
$(X_{m_{j-1}+1},\ldots,X_{m_j})$ is an orthonormal basis of $H_j$ for all $j=1,\ldots,\iota$.
\begin{rem}\rm
When a stratified group $\G$ is seen as an abstract vector space, equipped with a graded basis
$X_1,\ldots,X_n$, then the associated graded metric $g$ makes this basis orthonormal.
As a result, the metric $g$ becomes the Euclidean metric with respect to the corresponding coordinates $(x^1,x^2,\ldots,x^\iota)$.
\end{rem}
\subsubsection{Dilations}
For every $r>0$, a natural group automorphism $\delta_r:\cG\to\cG$ can be defined as the unique
algebra homomorphism such that
\[
\delta_r(X):=r X\qquad\text{for every }X\in V_1.
\]
This one parameter group of linear isomorphisms constitutes the family of the so-called {\em dilations} of $\cG$.
They canonically yield a one parameter group of dilations on $\G$ and can be denoted by
the same symbol. With respect to our coordinates, we have
\[
\delta_r(x_1,\dots,x_j,\dots, x_n)=(rx_1,\dots,r^{d_j}x_j,\dots, r^\iota x_n)\,.
\]
\subsubsection{Left translations}
For each element $x\in\G$, the group operation of $\G$ automatically defines the corresponding {\em left translation}
$l_x:\G\to\G$, with $l_x(z)=xz$ for all $z\in\G$. Right translations $r_x$ are defined in analogous way.

\subsection{Metric facts}
We will say that $d$ is a {\em homogeneous distance} on $\G$ if it is a continuous distance on $\G$ satisfying the following conditions
\begin{equation}\label{defhomogmetr}
d(zx,zy)=d(x,y)\quad\text{and}\quad d(\delta_r(x),\delta_r(y))=rd(x,y)\qquad\forall\:x,y,z\in\G,\:r>0.
\end{equation}
Important examples of homogeneous distances are the well known Carnot-Carath\'eo\-do\-ry distance and those constructed in \cite{FSSC5}. It is easily seen that two homogeneous distances are always equivalent. We will denote by $B(x,r)$ and $B_E(x,r)$, respectively, the open balls of center $x$ and radius $r$ with respect to a (fixed) homogeneous distance $d$ and the Euclidean distance on $\R^n\equiv \G$.

For $r>0$ we introduce the {\em boxes}
\[
\begin{split}
\bo(0,r):=\, & \{y\in \R^n:|y^j| < r^j\ \forall j=1,\dots,\iota\}\\
=\, & (-r,r)^{n_1}\times (-r^2,r^2)^{n_2}\times\dots\times (-r^\iota,r^\iota)^{n_\iota}\\
\bo(x,r):=\, & x\cdot \bo(0,r),\quad x\in\G\,.
\end{split}
\]
By homogeneity, it is easy to observe for any homogeneous distance $d$ there exists $C_{BB}=C_{BB}(d)\geq 1$ such that
\begin{equation}\label{eqstar}
\bo(x,r/C_{BB})\subset B(x,r) \subset \bo(x,C_{BB}r)\,.
\end{equation}
We will also use the notation
\[
\bo_E^\mu(0,r):=\{y\in\R^\mu:|y_j|< r\ \forall\: j=1,\dots,\mu\} = (-r,r)^\mu.
\]
When given $0<s<r$ and a linear subspace $W$ of $\R^\mu$ we pose
\[
\bo_{W^\perp\oplus W}^{\mu}(0; r, s):=\{y\in \bo_E^\mu(0,r):|\pi_W(y)|<s\}\,,
\]
where $\pi_W(y)$ is the canonical projection of $y$ on $W$. If $w_1,\dots,w_H$ is an orthonormal basis of
$W$ we clearly have
\begin{equation}\label{eqstarstar}
\{y\in \bo_E^\mu(0,r):|\langle y,w_i\rangle| < \tfrac{s}{\sqrt H}\ \forall i=1,\dots,H\}\ \subset\ \bo_{W^\perp\oplus W}^{\mu}(0; r, s)\,.
\end{equation}

From now on, a homogeneous distance $d$ is fixed.
We will use several times the following simple fact.
\begin{Lem}\label{lem:xtilde}
There exists $C=C(d)>0$ with the following property. For any fixed $r\leq 1$, $x\in B_E(0,r)$ and $j\in\{1,\dots,\iota\}$ there exists $\tilde x\in\G$ such that
\begin{align}
&\tilde x^1=\dots=\tilde x^j=0,\quad d(x,\tilde x)\leq C r^{1/j}\label{xtilde1} \\
& |\tilde x^h-x^h|\leq C r^2\quad\text{for any }h=j+1,\dots, \iota. \label{eq7bis}
\end{align}
\end{Lem}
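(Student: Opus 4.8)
The plan is to build $\tilde x$ explicitly, one layer at a time, arranging that its first $j$ layers vanish while the left-translated point $x^{-1}\tilde x$ has \emph{no} components of degree larger than $j$. I will use two facts about the coordinates. First, the inverse is simply $x^{-1}=-x$: by the antisymmetry of the quadratic factor in \eqref{eq0.2} one has $Q(x,-x)=Q(-x,x)=0$, whence $x\cdot(-x)=(-x)\cdot x=0$. Second, writing $Q^i=(Q_{m_{i-1}+1},\dots,Q_{m_i})$ for the block of components of $Q$ of degree $i$, formula \eqref{eq0.2} shows that $Q^i(a,b)$ depends only on the layers $a^1,\dots,a^{i-1}$ and $b^1,\dots,b^{i-1}$, and that each of its monomials carries a factor $a_kb_h-a_hb_k$ with $d_k,d_h<i$.

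With this in hand I define $\tilde x$ recursively: set $\tilde x^1=\dots=\tilde x^j=0$, and for $i=j+1,\dots,\iota$ put
\[
\tilde x^i := x^i - Q^i(-x,\tilde x).
\]
This is well posed, since $Q^i$ only involves the layers of $\tilde x$ of degree $<i$, which are already determined. Setting $z:=x^{-1}\tilde x=(-x)\cdot\tilde x=-x+\tilde x+Q(-x,\tilde x)$ and reading off layers, the definition gives $z^i=0$ for every $i>j$. For $i\le j$, the vanishing of the first $j$ layers of $\tilde x$ forces every factor $a_kb_h-a_hb_k$ in $Q^i(-x,\tilde x)$ to vanish (as $d_k,d_h<i\le j$ gives $\tilde x_k=\tilde x_h=0$), so $Q^i(-x,\tilde x)=0$ and hence $z^i=-x^i$.

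To prove \eqref{eq7bis} I estimate $\tilde x^i-x^i=-Q^i(-x,\tilde x)$ by induction on $i>j$. Since $x\in B_E(0,r)$ and $r\le 1$, all components of $x$ are bounded by $r$; assuming inductively $|\tilde x^l|\le C_l\,r$ for $l<i$ (which holds trivially for $l\le j$, where $\tilde x^l=0$), every quadratic factor $a_kb_h-a_hb_k$ in $Q^i(-x,\tilde x)$ is a sum of products of two coordinates each of size $O(r)$, hence $O(r^2)$; the polynomial prefactors $R^{kh}$ are bounded because $\tilde x$ remains in a fixed bounded set. This yields $|\tilde x^i-x^i|=|Q^i(-x,\tilde x)|\le C\,r^2$, which is exactly \eqref{eq7bis}, and closes the induction with $|\tilde x^i|\le (1+C)r$. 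Finally, \eqref{xtilde1} follows from the box--ball comparison \eqref{eqstar}: by left invariance $d(x,\tilde x)=d(0,z)$, and since $z^i=0$ for $i>j$ while $|z^i|=|x^i|\le r\le r^{i/j}=(r^{1/j})^i$ for $i\le j$ (using $r\le 1$ and $i\le j$), we get $z\in\bo(0,r^{1/j})$ and therefore $d(0,z)\le C_{BB}\,r^{1/j}$.

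The main obstacle is the quantitative part of the second step: one must verify that the finitely many constants $C_l$ arising along the layers are universal (depending only on the group law of $\G$) and that the recursively built $\tilde x$ never leaves the bounded set on which the prefactors $R^{kh}$ are controlled, so that the induction genuinely closes. Everything else reduces to the layered vanishing of $Q$ recorded above and to the elementary comparison between boxes and metric balls in \eqref{eqstar}.
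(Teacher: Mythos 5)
Your proof is correct, but it takes a genuinely different route from the paper's. The paper argues by induction on $j$: having killed the first $j-1$ layers to get $\bar x$, it right-multiplies by the single-layer element $(0,\dots,0,-\bar x^j,0,\dots,0)$ and estimates the distance by the triangle inequality, $d(x,\tilde x)\leq d(x,\bar x)+d(\bar x,\tilde x)\leq Cr^{1/(j-1)}+C_{BB}|\bar x^j|^{1/j}$, absorbing the first term via $r\leq 1$; the constant thus grows along the induction. You instead build $\tilde x$ in one shot: your recursion $\tilde x^i:=x^i-Q^i(-x,\tilde x)$ amounts to setting $\tilde x=x\cdot w$, where $w$ is minus the truncation of $x$ to the layers $\leq j$, so that $x^{-1}\tilde x=w$ is \emph{exactly} supported in the low layers with entries of size at most $r$. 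This buys you two things: the distance estimate is a single application of the box--ball comparison \eqref{eqstar}, giving the uniform constant $C_{BB}$ independent of $j$, and you get precise structural information on $x^{-1}\tilde x$ rather than just error bounds. The price is that the well-posedness of your recursion requires $Q^i(a,b)$ to depend only on the layers of $a$ and $b$ of degree $<i$; this is true and standard, but it does not follow from \eqref{eq0.2} alone (the prefactors $R_j^{kh}$ are not constrained there) -- it follows from the $\delta_r$-homogeneity of degree $d_j$ of $Q_j$, which forces every monomial, having at least two factors, to involve only coordinates of degree $<d_j$. You should cite that homogeneity argument explicitly; note that the paper's iterative scheme sidesteps this issue entirely, since at each step one factor of the product is supported in a single layer, so only the antisymmetric factor structure of \eqref{eq0.2} together with the crude bound \eqref{eq0.1} is needed.
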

\begin{proof}
In the case $j=1$, we define
\[
\begin{split}
\tilde x=\,&  x\cdot(-x^1,0,\dots,0)=  (x^1,\dots,x^\iota)\cdot(-x^1,0,\dots,0)\\
=\,& (0,x^2+O(r^2),\dots,x^\iota+O(r^2))\,,
\end{split}
\]
where the last equality follows from \eqref{eq0.1} and $O(\cdot)$ is understood with respect to the Euclidean norm.
By \eqref{eqstar} we have
\[
d(x,\tilde x)=d(0,(-x^1,0,\dots,0))\leq C_{BB}r
\]
whence \eqref{xtilde1} and \eqref{eq7bis} follow.

We now argue by induction on $j\geq 2$, assuming the existence of some $\bar x$ such that
\[
\bar x^1=\dots=\bar x^{j-1}=0,\quad d(x,\bar x)\leq C r^{1/(j-1)}\quad \mbox{and}\quad |\bar x^h-x^h|\leq C r^2
\]
for any $h=j,\dots, \iota$. Defining $\tilde x=\bar x\cdot (0,\dots,0,-\bar x^j,0,\dots,0)$, applying both
\eqref{eq0.2} and \eqref{eq0.1} we obtain
\[
\begin{split}
\tilde x=\,&  (\bar x^1,\dots,\bar x^\iota)\cdot(0,\dots,0,-\bar x^j,0,\dots,0) \\
=\,& (0,\dots,0,0,\bar x^{j+1}+O(r^2),\dots,\bar x^{\iota}+O(r^2))\,.
\end{split}
\]
Thus, by inductive hypothesis we get $\tilde x =(0,\dots,0,0, x^{j+1}+O(r^2),\dots, x^{\iota}+O(r^2))$.
As a result, we arrive at the following inequalities
\[
\begin{split}
d(x,\tilde x)\leq\,& d(x,\bar x) + d(\bar x,\tilde x) \leq C r^{1/(j-1)} + d(0,(0,\dots,0,-\bar x^j,0,\dots,0))\\
\leq\,&  C r^{1/j} + C_{BB}|\bar x^j|^{1/j}
= C r^{1/j} + C_{BB}|x^j+O(r^2)|^{1/j} \leq  \widetilde C r^{1/j}\,.
\end{split}
\]
that complete the proof.
\end{proof}
\subsubsection{Hausdorff measures and coverings}
For the sake of completeness, we recall the definitions of Hausdorff measures.
Let $q\geq 0$ and $\delta>0 $ be fixed; we define
\[
\begin{split}
& \Haus^q_\delta(E):=\inf\left\{\sum_{i=1}^\infty (\mathrm{diam}\:E_i)^q: E\subset \cup_i E_i,\ \mathrm{diam}\:E_i <\delta\right\}\\
& \Shaus^q_\delta(E):=\inf\left\{\sum_{i=1}^\infty (\mathrm{diam}\:B_i)^q: E\subset \cup_i B_i,B_i=B(x_i,r_i)\text{ balls},\ \mathrm{diam}\:B_i<\delta\right\}\,.
\end{split}
\]
The $q$-dimensional {\em Hausdorff measure} of $E\subset\G$ is
\[
\Haus^q(E):=\lim_{\de\to 0^+}\ \Haus^q_\delta(E)
\]
while the $q$-dimensional {\em spherical Hausdorff measure} of $E$ is
\[
\Shaus^q(E):=\lim_{\de\to 0^+}\ \Shaus^q_\delta(E)\,.
\]
The {\em Hausdorff dimension} of $E$ is
\[
\dim_H E:= \inf \{q:\Haus^q(E)=0\} = \sup\{q:\Haus^q(E)=+\infty\}\,.
\]
It is well-known that $\dim_H \G$ coincides with the {\em homogeneous dimension} $Q:=n_1+2n_2+\dots+\iota n_\iota$ of $\G$.
The standard Euclidean Hausdorff  measure on $\G=\R^n$ is denoted by $\Haus^q_{|\cdot|}$. For more information on the properties of these measures, see for instance \cite{Federer69, mattila,simon}.

We state without proof the following simple fact.
\begin{Pro}\label{prop:esponenti}
Let $\theta>0$ and let $E\subset X$, where $X$ is a metric space.
If for all $\ep\in(0,1)$ the set $E$ can be covered by $N_\ep$ balls of radius $\ep^\beta$ with $N_\ep\leq C\, \ep^{-q}$
and $C>0$ independent from $\ep$, then the Hausdorff dimension of $E$ is not greater than $q/\beta$.
\end{Pro}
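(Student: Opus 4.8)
The plan is to cover $E$ by balls of a single fixed radius $\ep^\beta$ and then invoke the standard relationship between covering numbers and Hausdorff dimension. First I would fix an arbitrary exponent $q' > q/\beta$ and show that $\Haus^{q'}(E) = 0$; since the Hausdorff dimension is the infimum of all exponents $s$ with $\Haus^s(E)=0$, this immediately gives $\dim_H E \leq q' $, and letting $q' \downarrow q/\beta$ yields the claim. To estimate $\Haus^{q'}(E)$, I would use the covering by $N_\ep$ balls of radius $\ep^\beta$ provided by hypothesis: each such ball has diameter at most $2\ep^\beta$, so it is an admissible set in the definition of $\Haus^{q'}_\delta$ once $2\ep^\beta < \delta$.

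The key estimate is then the direct computation
\begin{equation}\label{eq:cover-est}
\Haus^{q'}_{2\ep^\beta}(E) \leq \sum_{i=1}^{N_\ep} (2\ep^\beta)^{q'} = N_\ep \, 2^{q'} \ep^{\beta q'} \leq C\, 2^{q'}\, \ep^{\beta q' - q}\,.
\end{equation}
Since $q' > q/\beta$ means $\beta q' - q > 0$, the right-hand side of \eqref{eq:cover-est} tends to $0$ as $\ep \to 0^+$. Because $2\ep^\beta \to 0$ as well, taking $\ep \to 0^+$ computes the Hausdorff measure: $\Haus^{q'}(E) = \lim_{\delta \to 0^+}\Haus^{q'}_\delta(E) = 0$. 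This confirms $\Haus^{q'}(E)=0$ for every $q' > q/\beta$, hence $\dim_H E \leq q/\beta$.

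There is essentially no serious obstacle here; the statement is a routine but useful packaging of the elementary covering-number bound. The only points requiring a modicum of care are bookkeeping ones: keeping the constant $C$ independent of $\ep$ (which is exactly the hypothesis), correctly passing from the radius $\ep^\beta$ to the diameter $2\ep^\beta$ in the definition of $\Haus^{q'}_\delta$, and verifying that the scale $2\ep^\beta$ is admissible (i.e.\ eventually smaller than any prescribed $\delta$) so that the limit defining $\Haus^{q'}$ is genuinely controlled. The sign condition $\beta q' > q$ is precisely what converts the polynomial growth $N_\ep \leq C \ep^{-q}$ of the covering numbers into a vanishing measure, and it is this quantitative balance that encodes the bound $q/\beta$ on the dimension.
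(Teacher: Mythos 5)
Your proof is correct; the paper actually states Proposition~\ref{prop:esponenti} \emph{without} proof (``We state without proof the following simple fact''), and your covering-number argument is exactly the standard one the authors presuppose, so there is nothing to compare it against beyond confirming it works. The only cosmetic point: since the paper's definition of $\Haus^{q'}_\delta$ requires $\mathrm{diam}\,E_i<\delta$ strictly, while a ball of radius $\ep^\beta$ has diameter at most $2\ep^\beta$, the subscript in your key estimate should be any $\delta>2\ep^\beta$ (say $3\ep^\beta$), which changes nothing in the limit $\ep\to0^+$.
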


The following result, see e.g. \cite[Theorem 3.3]{simon}, will be useful in the sequel.
\begin{The}[$5r$-covering]\label{5rcovering}
Let $(X,d)$ be a separable metric space and $E\subset X$; let $r>0$ be fixed. Then there exists a subset $F\subset E$ at most countable such that
\[
E\subset \bigcup_{x\in F} B(x,5r)\quad\text{and}\quad B(x,r)\cap B(x',r)=\emptyset\ \forall x,x'\in F, x\neq x'\,.
\]
\end{The}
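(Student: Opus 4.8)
The plan is to build $F$ as a \emph{maximal} family of points of $E$ whose $r$-balls are pairwise disjoint, and then to extract both the covering property and the at-most-countability of $F$ from maximality and separability, respectively. This is the standard Vitali-type argument; all the geometric content is carried by the triangle inequality, so the proof is short once the right family is selected.

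First I would consider the collection $\mathcal{P}$ of all subsets $S\subset E$ with the property that the balls $\{B(x,r):x\in S\}$ are pairwise disjoint, partially ordered by inclusion. The union of any chain in $\mathcal{P}$ again belongs to $\mathcal{P}$: disjointness is a condition on pairs of points, and any two points of such a union already lie in a single member of the chain, where they are separated by hypothesis. Zorn's lemma then yields a maximal element $F\in\mathcal{P}$, which by construction satisfies $B(x,r)\cap B(x',r)=\emptyset$ for all distinct $x,x'\in F$.

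Next I would establish the two remaining properties. For countability, fix a countable dense subset $\{q_k\}_{k\in\N}$ of $X$; since each $B(x,r)$ with $x\in F$ is open and nonempty (it contains $x$), it contains some $q_{k(x)}$, and the pairwise disjointness forces the assignment $x\mapsto q_{k(x)}$ to be injective, so $F$ is at most countable. For the covering $E\subset\bigcup_{x\in F}B(x,5r)$ I would argue from maximality: given $y\in E$, if $y\in F$ the inclusion $y\in B(y,5r)$ is trivial, and otherwise $F\cup\{y\}\notin\mathcal{P}$, so $B(y,r)$ must meet $B(x,r)$ for some $x\in F$; choosing $z$ in the intersection and using the triangle inequality gives $d(x,y)\leq d(x,z)+d(z,y)<2r<5r$, whence $y\in B(x,5r)$.

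The argument is elementary and I do not anticipate a genuine obstacle; the only step that truly uses the hypotheses is the passage to countability, where separability is essential, since in a non-separable space the maximal disjoint family could be uncountable. It is worth noting that the constant $5$ is far from sharp in this formulation: because all selected balls share the same radius $r$, the triangle inequality already yields the covering with radius $2r$. The value $5r$ is retained only to match the conclusion of the general Vitali covering lemma, where the balls are allowed to have varying radii and the factor $5$ becomes necessary.
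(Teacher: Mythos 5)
Your proof is correct. Note that the paper itself does not prove this statement: it is quoted without proof from the reference [Simon, Lectures on geometric measure theory, Theorem 3.3], and your argument --- a maximal family of pairwise disjoint $r$-balls via Zorn's lemma, countability from separability, and the covering property from maximality plus the triangle inequality --- is precisely the standard proof of that cited result, specialized to balls of equal radius. Your closing remarks are also accurate: separability is exactly what makes $F$ countable, and with equal radii the enlargement factor $2$ (indeed $2r$, or $3r$ if one instead selects greedily along a countable dense subset of $E$ and avoids Zorn's lemma altogether) already suffices, the constant $5$ being needed only in the variable-radius Vitali lemma.
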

\subsection{Multi-indices, degrees and maximal dimension}\label{multiindex}
We denote by $I_p$ the set of those multi-indices  $\al=(\al_1,\dots,\al_p)\in\{1,\dots,n\}^p$ such that $1\leq \al_1<\dots<\al_p\leq n$.
We also set
\[
d(\al):=d_{\al_1}+\dots+d_{\al_p}\,.
\]
We denote by $D(p)$ the maximum integer $d(\al)$ when $\al$ varies in $I_p$. We call this number the
{\em maximal dimension}, that is uniquely defined for any given $p\in\{1,2,\ldots,n\}$.
Clearly, $D(n)$ equals the homogeneous dimension $Q$ of $\G$.
The maximal dimension can be computed in the following way. Define $\ell=\ell(p)$ by imposing
\begin{equation}\label{defell}
\left\{\begin{array}{ll}
\ell:=\iota & \text{if }p\leq n_\iota\\
\displaystyle\sum_{j=\ell+1}^\iota n_j < p\leq \sum_{j=\ell}^\iota n_j\quad & \text{otherwise.}
\end{array}\right.
\end{equation}
Clearly, $\ell$ depends on $p$ and it can be equivalently defined by $\ell(p):=d_{n+1-p}$,
that represents the lowest possible degree among tangent vectors of $\text{span}\{X_{n-p+1},\ldots,X_n\}$,
where $(X_1,\ldots,X_n)$ is an adapted basis of the stratified Lie algebra $\cG$.
It is also easy to see that
\begin{equation}\label{eq0.3}
D(p)= \sum_{j=\ell(p)+1}^\iota j n_j +\ell(p)\, \Big(p-\sum_{j=\ell(p)+1}^\iota  n_j\Big)\,,
\end{equation}
where the two summations in \eqref{eq0.3} have to be understood as 0 when $\ell(p)=\iota$.
We also set
\begin{equation}\label{eq0.3bis}
 r_p:= p-\sum_{j=\ell(p)+1}^\iota  n_j\geq 1,
\end{equation}
so that
\begin{equation}\label{eq0.3ter}
 p=r_p+n_{\ell(p)+1}+\dots+n_\iota\quad\text{and}\quad D(p)= \ell(p) r_p + (\ell(p)+1)n_{\ell(p)+1}+\dots+\iota n_\iota\,.
\end{equation}
It is worth noticing that $D(p)=\beta_+(p)$, where $\be_+$ is the {\em upper dimension comparison function for $\G$}, introduced in \cite{BTW}.
%
%
%

%
%
%
%
%
\subsection{Degree of submanifolds, projections and subdilations}\label{Subsect:Deg}
%
%
%
%
%

By $\Si\subset\G$, we denote a $p$-dimensional Lipschitz submanifold of $\G$. We define the {\em singular set}
\[
\SO:=\{x\in\Si: C_x\Si\text{ is not a $p$-dimensional subspace of $\R^n$}\}\,,
\]
where $C_x\Si$ is the (Euclidean) tangent cone to $\Si$ at $x$, i.e.,
\[
C_x\Si:=\left\{tv\in\R^n:t\geq 0, v=\lim_{i\to\infty}\tfrac{x_i-x}{|x_i-x|}\text{ for some sequence }(x_i)_{i\in\N}\subset\Si\text{ with }x_i\to x\right\}\,.
\]
We have the following fact.

\begin{Pro}\label{prop:negligibilitySigma0}
For any $p$-dimensional Lipschitz submanifold $\Si\subset\G$, we have
\begin{equation}\label{eq1}
\Haus^{D(p)}(\SO)=0\,.
\end{equation}
\end{Pro}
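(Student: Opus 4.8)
The statement asserts that the singular set $\SO$, where the Euclidean tangent cone fails to be a $p$-dimensional subspace, is $\Haus^{D(p)}$-negligible with respect to the \emph{homogeneous} distance $d$. The crux is a comparison between the homogeneous Hausdorff measure and the standard Euclidean one. Since $\Si$ is Lipschitz, the singular set $\SO$ is exactly the set where $\Si$ is not (Euclidean) differentiable, and by Rademacher's theorem this set is $\Haus^p_{|\cdot|}$-negligible in the Euclidean sense.

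The plan is to bridge the two Hausdorff measures by a dimensional estimate. First I would recall that the identity map $(\G,|\cdot|)\to(\G,d)$ is locally $1/\iota$-H\"older, because by \eqref{eqstar} a small Euclidean ball of radius $\rho$ is contained in a box $\bo(x,C\rho^{1/\iota})$ and hence in a $d$-ball of radius $C'\rho^{1/\iota}$; conversely the identity in the other direction is locally Lipschitz. Using the H\"older bound, a standard argument (essentially Proposition~\ref{prop:esponenti}, or the elementary behaviour of Hausdorff measures under H\"older maps) gives that a set which is $\Haus^p_{|\cdot|}$-null in the Euclidean metric has vanishing $\Haus^{\iota p}$-measure with respect to $d$. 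Since the top degree of a single coordinate is $\iota$, we always have $D(p)\leq \iota p$, which would immediately give $\Haus^{D(p)}(\SO)=0$ provided the exponent matching works in the right direction.

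The subtle point, and the step I expect to be the main obstacle, is that the crude H\"older exponent $1/\iota$ is too weak to land exactly at $D(p)$; it only controls $\Haus^{\iota p}$, and $\iota p$ may strictly exceed $D(p)$. To recover the sharp exponent one must use the refined covering estimate provided by Lemma~\ref{lem:xtilde}, which measures how efficiently one can approximate a point in the low-dimensional directions. Concretely, I would cover $\SO$ by Euclidean cubes adapted to the stratification: because the tangent cone degenerates on $\SO$, the relevant directions in which $\Si$ ``spreads'' have controlled degree, so that a Euclidean box of small sidelength, when measured in $d$, has diameter governed by a weighted sum of the homogeneous weights $d_j$ rather than by the worst weight $\iota$. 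Summing $(\mathrm{diam}_d)^{D(p)}$ over such a covering and using the Euclidean $\Haus^p_{|\cdot|}$-nullity to make the total number of cubes times their Euclidean size negligible should yield the exact exponent $D(p)$.

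Alternatively, and perhaps more cleanly, one can invoke the existing literature: the negligibility of the Euclidean singular set together with the area-type representation of $\Haus^{D(p)}\res\Si$ on the regular part reduces \eqref{eq1} to the Euclidean statement $\Haus^p_{|\cdot|}(\SO)=0$. Indeed, on $\Si\setminus\SO$ the submanifold has a genuine tangent space and one has the two-sided comparison between $\Haus^{D(p)}$ and the intrinsic measure, so the measure-theoretic content is entirely carried by the Euclidean regularity; the point $x\in\SO$ is precisely a non-differentiability point of the Lipschitz chart, and Rademacher gives $\Haus^p_{|\cdot|}(\SO)=0$. Transferring this through the local H\"older control of the identity map, with the \emph{weighted} refinement from Lemma~\ref{lem:xtilde} to secure the correct exponent $D(p)$, completes the argument. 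The whole difficulty is thus concentrated in replacing the naive exponent $\iota p$ by the sharp $D(p)$, which is exactly what the stratified covering lemma is designed to deliver.
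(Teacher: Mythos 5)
Your overall skeleton is the same as the paper's: first $\Haus^p_{|\cdot|}(\SO)=0$ (Rademacher, applied to the local Lipschitz graph maps), then a Euclidean-to-Carnot--Carath\'eodory comparison at the sharp exponent $D(p)$. The paper settles the second step in one line by citing \cite[Proposition 3.1]{BTW}, having already observed in Subsection~\ref{multiindex} that $D(p)$ equals the upper dimension comparison function $\beta_+(p)$ of that paper. It is precisely this second step where your proposal has a genuine gap. You correctly diagnose that the crude H\"older exponent $1/\iota$ only gives $\Haus^{\iota p}$-nullity, which is strictly weaker than what is needed; but your proposed repair rests on a misconception, namely that the sharp exponent is recovered by exploiting that ``the tangent cone degenerates on $\SO$'', i.e.\ some structure of $\SO$ as the bad set of a submanifold. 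This cannot work as stated: at points of $\SO$ there is no tangent plane at all, hence no ``directions of controlled degree'' to use, and a priori $\SO$ is just an arbitrary set with $\Haus^p_{|\cdot|}(\SO)=0$. The correct statement is purely metric and holds for \emph{every} bounded set $E$ with $\Haus^p_{|\cdot|}(E)=0$: then $\Haus^{D(p)}(E)=0$. The missing ingredient is the choice of covering scale. A Euclidean ball $B_E(x,r)$, $r<1$, lies in a translated box of side $\sim r$, which can be covered by
\[
N \;\approx\; \prod_{j>\ell}\Big(\tfrac{r}{\rho^{\,j}}\Big)^{n_j}\,,\qquad \rho:=r^{1/\ell},\quad \ell=\ell(p)\,,
\]
CC-balls of radius comparable to $\rho$ (one box suffices in each layer $j\leq\ell$, since there $\rho^{\,j}\geq r$). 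Using \eqref{eq0.3ter} one checks that $N\rho^{D(p)}\approx r^p$, so summing over a Euclidean cover of $\SO$ with $\sum_i r_i^p<\ep$ gives $\Haus^{D(p)}_\delta(\SO)\lesssim\ep$. Note that the gain comes from covering each Euclidean ball by \emph{many} CC-balls at the precise scale $r^{1/\ell(p)}$, not from any property of $\SO$; the exponent $D(p)$ emerges because the extremal sets for this count are those aligned with the highest-degree coordinate directions --- the same ``fill from the top stratum'' computation that defines $D(p)$. Lemma~\ref{lem:xtilde} by itself does not supply this counting; in the paper it serves a different purpose (the negligibility of the characteristic set).

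Your ``alternative, cleaner'' route is moreover circular: the two-sided comparison \eqref{equivalence} between $\Haus^{D(p)}\res\Si$ and the intrinsic measure is part of the main theorem that this very proposition feeds into, so it is not available here; and in any case it concerns the regular part $\Si\setminus\SO$, so it gives no information on the $\Haus^{D(p)}$-measure of $\SO$ itself.
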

\begin{proof}
Since $\Haus^p_{|\cdot|} (\SO) = 0$, by \cite[Proposition 3.1]{BTW} our claim immediately follows.
\end{proof}
Given a point $x\in\Si\setminus\SO$, we denote by $\tau_\Si(x)$ its tangent vector, i.e., the $p$-dimensional multivector associated with the $p$-plane $C_x\Si$. We can write
\[
\tau_\Si(x)=\sum_{\alpha\in I_p} c_\alpha X_\alpha\,,
\]
where $X_\al:=X_{\al_1}\wedge\ldots\wedge X_{\al_p}$. We then define the {\em degree} $d_\Si(x)$ of $\Si$ at $x$ as
\[
d_\Si(x):=\max\{d(\al):\al\in I_p\text{ and }c_\alpha\neq 0\}.
\]
The {\em degree} of $\Si$ is $d(\Si):=\max\{d_\Si(x):x\in\Si\setminus\SO\}$. Clearly, we have $d(\Si)\leq D(p)$.
The underlying metric $g$ on $\G$ gives a natural scalar product on multivectors, whose norm will be denoted by
$\|\cdot\|$. If $\tilde g$ is any Riemannian metric on $\G$, then at any $x\in\G$ we have a canonically defined
scalar product on any $\Lambda_pS_x$ where $S_x\subset T_x\G$ is a $p$-dimensional subspace.
We will denote by $\|\cdot\|_{\tilde g,x}$ its corresponding norm.
\begin{Def}\rm
Let $\tilde g$ be any Riemannian metric on $\G$, let $\Sigma$ be a $p$-dimensional Lipschitz submanifold
and let $x\in\Sigma\setminus\SO$. We define the {\em unit tangent $p$-vector} with respect to $\tilde g$
as follows
\begin{equation}
\tau_{\Sigma,\tilde g}(x)=\frac{\tau_\Sigma(x)}{\|\tau_\Sigma(x)\|_{\tilde g,x}}\,,
\end{equation}
where $\tau_\Sigma(x)$ is any tangent $p$-vector of $\Sigma$ at $x$.
\end{Def}
Dilations of $\G$ canonically extend to dilations on multivectors as follows
\[
(\Lambda_p\delta_r)(v_1\wedge\ldots\wedge v_p)=(\delta_rv_1)\wedge\ldots\wedge(\delta_rv_p)
\]
for all $v_1,\ldots,v_p\in\G$, therefore we have
\[
(\Lambda_p\delta_r)(X_\alpha)=(\Lambda_p\delta_r)(X_{\al_1}\wedge\ldots\wedge X_{\al_p})=r^{d_{\alpha_1}+\cdots+d_{\alpha_p}}\,X_\alpha
=r^{d(\alpha)}\,X_\alpha\,.
\]
\begin{Def}\rm
A $p$-vector $v\in\Lambda_p\G$ is {\em homogeneous} of degree $l\in\N\setminus\{0\}$
if $(\Lambda_p\delta_r)v=r^l v$ for all $r>0$.
\end{Def}
Our scalar product on $\Lambda_p(\G)$ allows us to introduce the following canonical projections,
hence homogeneous multivectors of different degrees are orthogonal.
\begin{Def}\rm
Let $p,D\in\N$ be such that $1\leq p\leq D\leq D(p)$. Let us introduce the linear subspace
$\Lambda_{D,p}(\G)$ of $\Lambda_p\G$ made by all homogeneous $p$-multivectors of degree $D$.
With respect to the scalar product of $\G$, the following orthogonal projection
\[
\pi_D:\Lambda_p(\G)\to\Lambda_{D,p}(\G)
\]
is uniquely defined. We say that $\pi_D$ is the {\em projection of degree $D$}.
If we consider a $p$-vector $t\in\Lambda_pS_x$ with $S_x\subset T_x\G$, a pointwise projection
$\pi_{D,x}(t)$ is automatically defined, taking the left translated multivector
$(\Lambda_pdl_{x^{-1}})(t)\in \Lambda_p(T_0\G)$, identifying $T_0\G$ with $\G$ and applying
$\pi_D$ to this translated multivector, hence
\[
 \pi_{D,x}(t)=(\Lambda_pdl_x)\circ\pi_D\circ(\Lambda_pdl_{x^{-1}})(t).
\]
To simplify notation, both projection $\pi_D$ applied to $v\in\Lambda_p\G$ and
$\pi_{D,x}$ applied to $w\in\Lambda_pS_x$ will be also denoted by $(v)_D$ and $(w)_{D,x}$,
respectively.
\end{Def}
\begin{rem}\rm
The previous notions allow us to consider the ``density function'' with respect to $\tilde g$, defined as
$\Sigma\ni x\to\|(\tau_{\Sigma,\tilde g}(x))_D\|$. This function naturally appears in the
representation of the $D$-dimensional ``intrinsic measure'' of a submanifold.
\end{rem}
The following lemma will be useful in the sequel. It can be proved repeating exactly
the same arguments of \cite[Lemma 3.1]{MV}.
\begin{Lem}\label{goodtangentvectors}
Let $\Si\subset\G$ be a $p$-dimensional Lipschitz submanifold and fix $x\in\Si\setminus\SO$.
Then we can find a graded basis $X_1,\dots,X_n$ of $\cG$ and a basis $v_1,\dots,v_p$ of $T_x\Si$ such that, writing $v_j=\sum_{i=1}^n C_{ij} X_i(x)$, we have
\begin{equation}\label{matriceC}
C:=(C_{ij})_{\substack{i=1,\dots,n\\j=1,\dots,p}}=\left[\begin{array}{c|c|c|c}
Id_{\alpha_1} & 0 & \cdots & 0\\
0 & \ast & \cdots & \ast\\
\hline 0 & Id_{\alpha_2} & \cdots & 0 \\
0 & 0 & \cdots & \ast\\
\hline \vdots & \vdots & \ddots & \vdots\\
\hline 0 & 0 & \cdots & Id_{\alpha_\iota}\\
0 & 0 & \cdots & 0
\end{array}\right]
\end{equation}
where $\alpha_k$ are integers satisfying $0\leq\alpha_k\leq n_k$ and $\alpha_1+\dots+\alpha_\iota=p$.
The symbols $0$ and $\ast$ denote null and arbitrary matrices of the proper size, respectively. We have
\begin{equation}\label{eq2}
d_\Si(x)=\sum_{k=1}^\iota k\alpha_k\,.
\end{equation}
\end{Lem}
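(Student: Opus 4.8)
The plan is to reduce the statement to a linear-algebra normal form for the single $p$-plane $T_x\Si\subset T_x\G\cong\R^n$, exactly as in \cite[Lemma 3.1]{MV}. The changes of basis we are allowed to perform are an arbitrary change of basis of $T_x\Si$ (column operations on the matrix $C$) together with a change of graded basis of $\cG$; since a graded basis must stay adapted and orthonormal for $g$, the latter reduces to an independent orthogonal transformation $O_k\in O(n_k)$ inside each layer $V_k$, i.e. to row operations constrained to act block-diagonally and orthogonally. The target \eqref{matriceC} is then a ``graded column echelon form'' for $C$ under precisely these operations.

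I would build this form by processing the layers from the top $V_\iota$ down to $V_1$. Write $P_k\colon\cG\to V_k$ for the orthogonal projection onto the $k$-th layer. At the generic step I work with the columns whose components in the layers above $V_k$ have already been annihilated, and I set $\alpha_k:=\dim P_k(\text{active columns})$. A column operation splits these columns into $\alpha_k$ ``pivot'' columns, with $V_k$-projections independent, and the remaining ones, whose $V_k$-component I clear by subtracting suitable multiples of the pivots; this does not reintroduce components above $V_k$, since all active columns already vanish there. An orthogonal rotation $O_k$ of $V_k$ aligns the pivot projections with the first $\alpha_k$ vectors of the adapted basis of $V_k$, and a final column operation among the pivots turns the resulting invertible block into $Id_{\alpha_k}$. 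Retiring these $\alpha_k$ columns into ``column group $k$'' and iterating down to $V_1$ produces \eqref{matriceC}: a group-$k$ column carries $Id_{\alpha_k}$ in layer $V_k$, vanishes in all higher layers, and is arbitrary (the blocks $\ast$) in the lower ones. Since each column is retired exactly once, and each $\alpha_k$ is the rank of a projection with $n_k$ rows, we obtain $0\leq\alpha_k\leq n_k$ and $\alpha_1+\dots+\alpha_\iota=p$.

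It then remains to read off \eqref{eq2}. Writing $v_j=\sum_i C_{ij}X_i(x)$, the tangent $p$-vector satisfies $\tau_\Si(x)=c\,v_1\wedge\dots\wedge v_p$ with $c\neq0$, so the coefficient of $X_\alpha$ is $c_\alpha=c\det C_\alpha$, where $C_\alpha$ is the $p\times p$ minor on the rows $\alpha$; hence $d_\Si(x)=\max\{d(\alpha):\det C_\alpha\neq0\}$. By construction a column in group $k$ has nonzero entries only in layers of degree at most $k$, so whenever $\det C_\alpha\neq0$ some permutation pairs each group-$k$ column with a selected row of degree at most $k$, and summing over the columns gives $d(\alpha)\leq\sum_k k\alpha_k$. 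Conversely, selecting the $\alpha_k$ pivot rows in each layer $k$ yields a minor which, after ordering both columns and rows by group, is block triangular with identity diagonal blocks, the off-diagonal zeros coming from the vanishing of group-$k$ columns above layer $k$; it therefore equals $\pm1\neq0$ and has degree exactly $\sum_k k\alpha_k$. Thus $d_\Si(x)=\sum_{k=1}^\iota k\alpha_k$.

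The only genuinely delicate point is the bookkeeping of the top-down elimination under the constraint that row operations stay orthogonal and block-diagonal: one must verify that clearing the layer-$V_k$ components and normalizing the pivot block never reintroduces nonzero entries in the already-cleared higher layers, and that the later rotations $O_{k-1},\dots,O_1$ only act on the harmless $\ast$ blocks of the previously retired columns. Once this is arranged, both the shape \eqref{matriceC} and the identity \eqref{eq2} follow as above.
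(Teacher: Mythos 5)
Your construction is essentially the argument the paper has in mind: the paper gives no proof of its own, deferring to \cite[Lemma 3.1]{MV}, and that proof is precisely the graded elimination you describe (column operations on the tangent basis combined with layer-wise orthogonal rotations of each $V_k$), together with the minor-expansion identity $c_\alpha=c\,\det C_\alpha$ for reading off the degree. One imprecision should be repaired, however: the top-down iteration as you describe it does \emph{not} literally produce \eqref{matriceC}. Once the group-$k'$ columns are retired at stage $k'$, their components in the layers $V_k$ with $k<k'$ are never touched again by column operations, so at the end they may carry nonzero entries in the \emph{pivot} rows of those lower layers, whereas \eqref{matriceC} requires zeros there (only the non-pivot rows carry the $\ast$ blocks). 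The repair is one further round of column operations: for $k=1,2,\dots,\iota-1$ in increasing order, subtract from each column of every group $k'>k$ the appropriate combination of group-$k$ columns so as to clear its entries in the pivot rows of layer $k$. Since at that stage a group-$k$ column vanishes in all layers above $k$, in the non-pivot rows of layer $k$, and (by the earlier stages) in the pivot rows of all layers below $k$, these operations create the required zeros without disturbing the identity blocks, the zeros in higher layers, or the zeros already produced. Note that your determinant argument for \eqref{eq2} is unaffected by this point: both the block-triangularity of the pivot minor and the upper bound $d(\alpha)\leq\sum_k k\alpha_k$ use only the identity blocks and the vanishing of group-$k$ columns in layers above $k$, so the degree identity was already sound with the weaker normal form your iteration produces.
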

\begin{rem}
{\upshape As already observed in \cite[Remark 3.2]{MV}, the previous lemma along with its proof are understood
to hold also in the case where some $\alpha_k$ possibly vanishes.
In this case the $\alpha_k$ columns of \eqref{matriceC} containing $Id_{\alpha_k}$ and
the corresponding vectors $v^k_j$ are meant to be absent.}
\end{rem}
The integers $\alpha_1,\ldots,\alpha_\iota$ of Lemma~\ref{goodtangentvectors} define a ``sub-grading''
for a $p$-dimensional subspace of $\R^n$, so that in analogy with the integers $m_j$ defined in \eqref{m_j}, we set
\begin{equation}\label{mu_j}
 \mu_0=0 \quad \mbox{and} \quad \mu_k=\sum_{l=1}^k\alpha_l\quad\mbox{for all}\quad k=1,\ldots,\iota.
\end{equation}
This new grading allows us to define for every $j\in\{1,\ldots,p\}$ the
{\em subdegree} $\sigma_j$ defined as follows
\begin{equation}\label{subdegree}
 \sigma_j:=k \quad \mbox{if and only if}\quad \mu_{k-1}<j\leq\mu_k\quad \mbox{for some}\quad
k\in\{1,\ldots,\iota\}\,.
\end{equation}
The corresponding {\em subdilations} $\lambda_r:\R^p\to\R^p$ are defined as follows
\[
 \lambda_r(\xi_1,\ldots,\xi_p)=(r^{\sigma_1}\xi_1,r^{\sigma_2}\xi_2,\ldots,r^{\sigma_p}\xi_p)\quad
\mbox{for all}\quad r>0.
\]
\subsection{Transversal points and transversal submanifolds}
Let us fix a $p$-dimensional Lipschitz submanifold $\Sigma$ and consider
$x\in\Si\setminus\SO$, where $\SO$ is its singular set.
We say that $x$ is {\em transversal} if $d_\Sigma(x)$ equals the maximal dimension $D(p)$.
In this case we say that $\Sigma$ is a {\em transversal submanifold}, that is equivalent to the
condition $d(\Si)=D(p)$.
\begin{rem}\rm
For hypersurfaces, transversal points coincide with noncharacteristic points and when $p\geq n-n_1$ a $p$-dimensional submanifold of $\G$ is transversal if and only if it is {\em non-horizontal}, according to the terminology of \cite{Mag12A}.
\end{rem}
The following corollary is an easy consequence of the fact that $X_i(0)=e_i$.

\begin{cor}\label{chartransv}
Under the assumptions and notations of Lemma~\ref{goodtangentvectors}, the point $x$
is transversal if and only if the following conditions hold
\begin{equation}\label{flauto}
\alpha_{\iota}=n_\iota,\;\alpha_{\iota-1}=n_{\iota-1},\;\ldots,\;\alpha_{\ell+1}=n_{\ell+1},\;
\alpha_{\ell}=r_p,\;\alpha_{\ell-1}=0,\;\ldots,\;\alpha_1=0,
\end{equation}
where $\ell=\ell(p)$ is defined by \eqref{defell} and $r_p$ is defined in \eqref{eq0.3bis}.
If $x=0$ is transversal, then the vectors $v_1,\dots,v_p$ in Lemma~\ref{goodtangentvectors} constitute the columns of the matrix
\begin{equation}\label{matriceCtransv}
C^0=(C^0_{ij})=\left[\begin{array}{c|c|c|c|c}
\ast  & \cdots & \cdots & \cdots & \ast\\
\hline
Id_{r_p}  & 0 & \cdots & \cdots & 0  \\
 0  & \ast & \cdots & \cdots & \ast\\
\hline
0   & Id_{n_{\ell+1}} & 0 & \cdots & 0\\
\hline
\vdots  & \ddots & \ddots & \cdots & \vdots\\
\hline
\vdots  & \ddots & \cdots & \ddots & 0 \\
\hline
0  & \cdots & \cdots & 0& Id_{n_\iota}
\end{array}\right]\,.
\end{equation}
\end{cor}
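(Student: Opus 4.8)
The plan is to read off both assertions from Lemma~\ref{goodtangentvectors}, using the identity \eqref{eq2}, the constraints $0\le\alpha_k\le n_k$ and $\sum_{k=1}^\iota\alpha_k=p$ on the integers $\alpha_k$, and the explicit value \eqref{eq0.3ter} of $D(p)$. The key observation is that, by the very definition of $D(p)$ as the maximum of $d(\al)=d_{\al_1}+\cdots+d_{\al_p}$ over $\al\in I_p$, and since exactly $n_k$ of the indices $1,\dots,n$ have degree $k$, the number $D(p)$ equals the maximum of $\sum_{k=1}^\iota k\,c_k$ among all integer vectors $(c_1,\dots,c_\iota)$ subject to $0\le c_k\le n_k$ and $\sum_k c_k=p$. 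Granting this, the implication from \eqref{flauto} to transversality is immediate: substituting the values \eqref{flauto} into \eqref{eq2} and comparing with \eqref{eq0.3ter} gives $d_\Si(x)=\ell\,r_p+\sum_{k=\ell+1}^\iota k\,n_k=D(p)$, so that $x$ is transversal.

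For the converse I would prove that \eqref{flauto} is the unique maximizer, by a standard exchange argument. If $(\alpha_1,\dots,\alpha_\iota)$ realizes the maximum $D(p)$ and there were indices $i<j$ with $\alpha_i>0$ and $\alpha_j<n_j$, then decreasing $\alpha_i$ by one and increasing $\alpha_j$ by one would preserve both constraints while raising $\sum_k k\alpha_k$ by $j-i>0$, contradicting maximality. Hence, setting $\ell_0:=\min\{k:\alpha_k>0\}$, one necessarily has $\alpha_k=n_k$ for every $k>\ell_0$ and $\alpha_k=0$ for every $k<\ell_0$; the constraint $\sum_k\alpha_k=p$ then forces $\alpha_{\ell_0}=p-\sum_{k>\ell_0}n_k$. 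The inequalities $0<\alpha_{\ell_0}\le n_{\ell_0}$ translate exactly into $\sum_{k=\ell_0+1}^\iota n_k<p\le\sum_{k=\ell_0}^\iota n_k$, which are the defining conditions \eqref{defell} of $\ell(p)$; by uniqueness of $\ell(p)$ we get $\ell_0=\ell(p)$ and $\alpha_{\ell_0}=r_p$, namely exactly \eqref{flauto}.

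Finally, the matrix statement follows by substituting \eqref{flauto} into the block form \eqref{matriceC}. Since $x=0$ and $X_i(0)=e_i$, each $v_j=\sum_iC_{ij}X_i(0)$ has the $j$-th column of $C$ as its coordinate vector. The blocks of degree $k>\ell$ have $\alpha_k=n_k$, so their identity block fills the whole degree-$k$ row band and no $\ast$-rows survive; the degree-$\ell$ block has $\alpha_\ell=r_p$, producing the two rows carrying $Id_{r_p}$ and the free block $\ast$; and the blocks of degree $k<\ell$ have $\alpha_k=0$, so all their rows are unconstrained and merge into the single top $\ast$-band of \eqref{matriceCtransv}. This is precisely $C^0$.

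I expect no genuine obstacle here; the only points demanding care are the uniqueness in the exchange argument and the bookkeeping that identifies the bounds on $\alpha_{\ell_0}$ with the definition \eqref{defell} of $\ell(p)$, together with the observation that the $\ast$-bands of \eqref{matriceCtransv} are exactly the rows left unconstrained once \eqref{flauto} is imposed in \eqref{matriceC}.
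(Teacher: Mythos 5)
Your proposal is correct, and it supplies exactly the details the paper omits: the paper states Corollary~\ref{chartransv} without proof, noting only that it is ``an easy consequence of the fact that $X_i(0)=e_i$.'' Your argument is the natural expansion of that remark: the identification of $D(p)$ with the maximum of $\sum_k k\,c_k$ over integer vectors satisfying $0\le c_k\le n_k$ and $\sum_k c_k=p$, the exchange argument showing that \eqref{flauto} is the unique maximizer (which is the only genuinely substantive step, and your handling of it — including the identification of $\ell_0$ with $\ell(p)$ via \eqref{defell} — is sound, also in the boundary case $\ell(p)=\iota$ where $r_p=p$), and the block-by-block substitution of \eqref{flauto} into \eqref{matriceC}, where $x=0$ enters only to guarantee that the columns of $C$ are literally the coordinate vectors of the $v_j$. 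Nothing is missing.
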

The previous corollary shows that at transversal points the associated grading given by the integers
of \eqref{mu_j} and \eqref{subdegree} yields
\[
\mu_0=\cdots=\mu_{\ell-1}=0,\quad\mu_\ell=r_p\quad\mbox{and}
\quad \mu_{\ell+j}=r_p+\sum_{i=1}^{j}n_{\ell+i}\quad\mbox{for all}\quad j=1,\ldots,\iota-\ell,
\]
therefore the subdegrees are the following ones
\begin{equation}\label{degsigmaj}
\sigma_1=\ell ,\ldots,\sigma_{r_p}=\ell\quad\mbox{and}\quad \sigma_{r_p+s+\sum_{i=1}^{j}n_{\ell+i}}
=\sigma_{\mu_{\ell+j}+s}=\ell+j+1
\end{equation}
for all $s=1,\ldots, n_{\ell+j+1}$ and $j=0,1,\ldots,\iota-\ell-1$, where the term $\sum_{i=1}^j n_{\ell+i}$
in the previous formulae is meant to be zero when $j=0$.

%
%
%
%
%
\section{Blow-up at transversal points}\label{Sect:Transv}
%
%
%
%
%

This section is devoted to the proof of Theorem~\ref{BlwTrsv} stated in the Introduction.
We have first to recall some more notions
and fix other auxiliary objects. First of all $\tilde g$ will denote any auxiliary Riemannian metric on $\G$.
The corresponding Riemannian surface measure induced on a $C^1$ smooth submanifold $\Sigma\subset\G$
will be denoted by $\tilde\mu$.
\begin{Def}\label{metfact}\rm
A graded metric $g$ on $\G$ is fixed and we set $\mathbb B=\{x\in\G: d(0,x)<1\}$, where
$d$ is a homogeneous distance. The stratified group $\G$ is seen as an abstract vector space
and $S$ denotes one of its $p$-dimensional linear subspaces. We consider any simple $p$-vector $\tau$
associated to $S$. Then we define the  {\em metric factor}
\begin{equation}\label{f:metfact}
 \theta_g^d(\tau)={\mathcal H}^p_{|\cdot|}(S\cap \mathbb B)\,.
\end{equation}
Here $|\cdot|$ denotes the Euclidean metric on $\G$ with respect to a fixed graded basis $(X_1,\ldots,X_n)$
and the sets $S$ and $\mathbb B$ are represented with respect to the associated coordinates of the first kind.
\end{Def}
\begin{rem}\rm
In the previous definition, any other simple $p$-vector
$\lambda\tau$ with $\lambda\neq0$ defines the same subspace $S$. Conversely, whenever a simple $p$-vector
$\zeta$ is associated to $S$, that is $\zeta=\zeta_1\wedge\cdots\wedge\zeta_p$ and $(\zeta_1,\ldots,\zeta_p)$
is basis of $S$, then $\zeta=t\,\tau$ for some $t\neq0$.
\end{rem}
\begin{rem}\rm
The metric factor only depends on the Riemannian metric $g$ and the homogeneous distance $d$.
In fact, under the assumptions of Definition~\ref{metfact}, let us consider another graded basis
$(Y_1,\ldots,Y_n)$ with associated coordinates $(y_1,\ldots,y_n)$ of the first kind.
Then the linear change of variables from these coordinates to the original coordinates
$(x_1,\ldots,x_n)$ associated to $(X_1,\ldots,X_n)$ is an isometry of $\R^n$, hence the number
\eqref{f:metfact} is preserved under the coordinates $(y_i)$.
\end{rem}
About the statement of Theorem~\ref{BlwTrsv}, we wish to clarify that the Lie subgroup $\Pi_\Si(x)$
appearing in \eqref{eq:BlwTrsv} is also homogeneous in the sense that is closed under dilations.
Furthermore, it is a $p$-dimensioanl homogeneous subgroup of $\G$ of the form
\[
Z\oplus H_{\ell+1}\oplus\cdots\oplus H_\iota,
\]
where $S\subset H_\ell$ is a linear space of dimension $r_p$.
The integers $\ell$ and $r_p$ are defined in \eqref{defell} and \eqref{eq0.3bis}. In particular, 
$\Pi_\Si(x)$ is also a normal subgroup. The same subgroup is more conveniently defined later in the proof of Theorem~\ref{BlwTrsv}, see \eqref{defPiSigma}.

\begin{proof}[Proof of Theorem~\ref{BlwTrsv}]
First of all, our claim allows us to assume that there exists an open neighbourhood $U\subset\R^p$ of the origin
such that $\Psi:U\ra\Sigma$ is a $C^1$ smooth diffeomorphism with $\Psi(0)=x$.
Defining the translated submanifold $\Sigma_x=l_{x^{-1}}(\Sigma)$ , we observe that
\[
d_{\Sigma_x}(0)=d_\Sigma(x)=D(p)=d(\Sigma)=d(\Sigma_x).
\]
We consider the translated diffeomorphism $\phi=l_{x^{-1}}\circ\Psi$, with
$\phi:U\to\Sigma_x$. Taking into account Corollary~\ref{chartransv}, we have
a graded basis of left invariant vector fields $X_1,\ldots,X_n$
and linearly independent vectors $v_1,\ldots,v_p\in T_0\Sigma_x$ such that the matrix $C^0=(C^0_{ij})$,
defined by $v_j=\sum_{j=1}^n C^0_{ij} X_i(0)=\sum_{j=1}^n C^0_{ij} e_i$ is given by \eqref{matriceCtransv}.
The vector fields $X_i$ in our coordinates have the form
\begin{equation}\label{X_i}
X_i=\sum_{l=1}^n a_i^l\,e_l
\end{equation}
and their (nonconstant) coefficients satisfy (see e.g. \cite{FS})
\begin{equation}\label{ailX}
a_i^l=\left\{\begin{array}{ll}
\delta_i^l & 	d_l\leq d_i \\
\mbox{\scriptsize homogeneous polynomial of degree $d_l-d_i$} & \mbox{otherwise,}
\end{array}\right.
\end{equation}
where homogeneity refers to intrinsic dilations of the group.
After a linear change of variable on $\phi$, we can also assume that $\der_{t_i}\phi(0)=v_i$ for all $i=1,\ldots,p$,
where each $v_i$ is the $i$-th column of \eqref{matriceCtransv}.
Let $\pi_0:\R^n\to\R^p$ be the projection
\[
\pi_0(x_1,\ldots,x_n)=(x_{m_{\ell-1}+1},\ldots,x_{m_{\ell-1}+r_p},x_{m_\ell+1},\ldots,x_n)\,.
\]
Thus, $d(\pi_0\circ\phi)(0)$ is invertible and the inverse mapping theorem provides us with new variables
$y=(y_{m_{\ell-1}+1},\ldots,y_{m_{\ell-1}+r_p},y_{m_\ell+1},\ldots,y_n)$ such that $\Sigma_x$ is represented
near the origin by $\gamma=\phi\circ(\pi_0\circ\phi)^{-1}$, which can be written as follows
\begin{equation*}
\big(\gamma_1(y),\ldots,\gamma_{m_{\ell-1}}(y),y_{m_{\ell-1}+1},\ldots,y_{m_{\ell-1}+r_p},\gamma_{m_{\ell-1}+r_p+1}(y),\ldots,
\gamma_{m_\ell}(y),y_{m_\ell+1},\ldots,y_n\big)
\end{equation*}
and it is defined in some smaller neighbourhood $(-c_1,c_1)^p\subset U$ for some $c_1>0$.
Furthermore, since $d(\pi_0\circ\phi)(0)$ is the identity mapping of $\R^p$, we get
\begin{equation}\label{derv_j}
(\der_1\gamma)(0)=v_1, \quad (\der_2\gamma)(0)=v_2, \quad\ldots\quad (\der_p\gamma)(0)=v_p,
\end{equation}
so that we have continuous functions $C_{ij}(y)$ with $C_{ij}(0)=C^0_{ij}$ such that
\begin{equation}\label{derjgamma}
(\der_j\gamma)(y)=\sum_{i=1}^n C_{ij}(y) X_i(\gamma(y))\quad \mbox{for all}\quad j=1,\ldots,p\,.
\end{equation}
Due to the structure of $C^0$ given in \eqref{matriceCtransv}, whenever $\sigma_j=\ell$, or equivalently
when $j=1,\ldots,r_p$, we have
\begin{equation}\label{strucC}
C_{ij}(y)= \delta_{i-m_{\ell-1},j}+o(1)\quad  \mbox{for any $i$ such that $d_i\geq\ell$}\,.
\end{equation}
When $\ell<\sigma_j\leq\iota$, or equivalently in the case $j>r_p$ and $j=\mu_{\sigma_j-1}+1,\ldots,\mu_{\sigma_j}$,
we get
\begin{equation}
\quad C_{ij}(y)=
\left\{\begin{array}{ll}
\delta_{ i-m_{\sigma_j-1},j-\mu_{\sigma_j-1} }+o(1) & \mbox{if $d_i=\sigma_j$ or $1\leq i-m_{\sigma_j-1}\leq n_{\sigma_j}$} \\
o(1) & \mbox{if $d_i>\sigma_j$}\,.
\end{array}\right.
\end{equation}
Let us introduce the $C^1$ smooth homeomorphism $\eta:\R^p\to\R^p$ as follows
\begin{equation}\label{etat}
\eta(t)=\bigg(\frac{|t_1|^{\sigma_1}}{\sigma_1}\sgn(t_1),\ldots,\frac{|t_p|^{\sigma_p}}{\sigma_p}\sgn(t_p)\bigg)\,,
\end{equation}
where its inverse mapping is given by the formula
\[
\zeta(\tau)=\bigg(\sgn(\tau_1)\sqrt[\sigma_1]{\sigma_1|\tau_1|},\ldots,\sgn(\tau_p)\sqrt[\sigma_p]{\sigma_p|\tau_p|}\bigg)
\]
and all $\sigma_j$ satisfy \eqref{degsigmaj}.
We consider the $C^1$ smooth reparametrization $\Gamma(t)=\gamma\big(\eta(t)\big)$ with partial derivatives
\begin{equation}\label{partjG}
\der_{t_j}\Gamma(t)=|t_j|^{\sigma_j-1}\,(\der_j\gamma)(\eta(t))=
|t_j|^{\sigma_j-1}\sum_{s,i=1}^n C_{ij}(\eta(t))a_i^s(\Gamma(t))\,e_s
\end{equation}
for all $j=1,\ldots,p$, where we have used both \eqref{X_i} and \eqref{derjgamma}.
We first observe that
\begin{equation}
 \Gamma_i(t)=o(|t|^{d_i})\quad\mbox{for} \quad \ 1\leq d_i<\ell.
\end{equation}
In fact, we have $\gamma(0)=0$ and $\eta(t)=O(|t|^\ell)$, hence
\begin{equation}\label{gilessml}
 \Gamma_i(t)=\gamma_i(\eta(t))=O(|\eta(t)|)=
  \left\{\begin{array}{ll}
 o(|t|^{d_i}) & \mbox{if $d_i<\ell$} \\
 O(|t|^\ell) &  \mbox{if $d_i=\ell$}\,.
 \end{array}\right.
\end{equation}
The main point is to prove the following rates of convergence
\begin{equation}\label{induct0}
\left\{\begin{array}{ll}
 \Gamma_i(t)=O(|t|^{\ell}) & \mbox{for}\;\ m_{\ell-1}<i\leq m_{\ell-1}+r_p \\
\Gamma_i(t)=o(|t|^{\ell}) & \mbox{for}\;\  m_{\ell-1}+r_p<i\leq m_\ell\,.
\end{array}\right.
\end{equation}
Since the first equation of \eqref{induct0} is already contained in \eqref{gilessml}, we have nothing to prove in the case
$r_p=n_\ell$ (because \eqref{induct0} does not have the second case).
Thus, we will assume that $r_p<n_\ell$ and then prove the second formula of \eqref{induct0}.
First of all, we apply \eqref{partjG} and compute the following partial derivatives
\begin{equation}\label{partjGamma}
\der_{t_j}\Gamma_i(t)=|t_j|^{\sigma_j-1}\,(\der_j\gamma_i)(\eta(t))=
|t_j|^{\sigma_j-1}\,\sum_{k=1}^n C_{kj}(\eta(t))a_k^i(\Gamma(t))
\end{equation}
for all $j=1,\ldots,p$ and all $i=1,\ldots,n$.
If $1\leq j\leq r_p$, we rewrite the previous sum as
\begin{equation*}
|t_j|^{\sigma_j-1}\bigg(\sum_{k: d_k<\ell} C_{kj}(\eta(t))a_k^i(\Gamma(t))+
\sum_{k: d_k\geq\ell} C_{kj}(\eta(t))a_k^i(\Gamma(t))\bigg)\,.
\end{equation*}
As a consequence, taking into account \eqref{strucC}, it follows that
\begin{equation*}
\begin{split}
\der_{t_j}\Gamma_i(t)&=|t_j|^{\ell-1}\bigg(a_{j+m_{\ell-1}}^i(\Gamma(t))+\sum_{k: d_k<\ell} C_{kj}(\eta(t))a_k^i(\Gamma(t))
+\sum_{k: d_k\geq\ell} o(1)\,a_k^i(\Gamma(t))\bigg)\\
&=|t_j|^{\ell-1}\bigg(a_{j+m_{\ell-1}}^i(\Gamma(t))+o(1)+\sum_{k: d_k<\ell} C_{kj}(\eta(t))a_k^i(\Gamma(t)) \bigg)\,.
\end{split}
\end{equation*}
Since $d_k<\ell$, we have that $a^i_k$ is a nonconstant homogenous polynomial.
It follows that $a^i_k\circ\Gamma=o(1)$ and we get
\begin{equation}
\der_{t_j}\Gamma_i(t)=|t_j|^{\ell-1}\Big(a_{j+m_{\ell-1}}^i(\Gamma(t))+o(1)\Big)
\end{equation}
Since $m_{\ell-1}+j\leq m_{\ell-1}+r_p<i\leq m_\ell$ and $d_{m_{\ell-1}+j}=d_i$,
formula \eqref{ailX} implies that $a_{j+m_{\ell-1}}^i$ is the null polynomial.
It follows that
\begin{equation}\label{derjrp}
\der_{t_j}\Gamma_i(t)=o(|t|^{\ell-1})\quad\mbox{whenever}\quad 1\leq j\leq r_p\;\ \mbox{and}\;\ m_{\ell-1}+r_p<i\leq m_\ell\,.
\end{equation}
If $r_p<j\leq p$, then \eqref{partjGamma} implies that
\begin{equation*}
\der_{t_j}\Gamma_i(t)=|t_j|^{\sigma_j-1}\,\sum_{k=1}^n C_{kj}(\eta(t))a_k^i(\Gamma(t))=|t_j|^{\sigma_j-1}O(1)=O(|t|^{\sigma_j-1})\,.
\end{equation*}
Since in this case $\sigma_j>\ell$, we get in particular that
\begin{equation}\label{derjrpm}
\der_{t_j}\Gamma_i(t)=o(|t|^{\ell-1})\quad \mbox{whenever}\quad r_p<j\leq p\;\ \mbox{and}\;\ 1\leq i\leq n\,.
\end{equation}
Joining \eqref{derjrp} with \eqref{derjrpm}, it follows that
\[
\nabla\Gamma_i(t)=o(|t|^{\ell -1}) \quad\mbox{for all }\quad i=m_{\ell-1}+r_p+1,\ldots,m_\ell\,,
\]
that proves the second equation of \eqref{induct0}.

Now, we write explicitly the form of $\Gamma$ as the composition $\gamma\circ\eta$.
By the previous formulae for $\gamma$ and $\eta$, we get
\begin{equation}\label{Gammat}
\begin{split}
\Gamma(t)=\Big(\Gamma_1(t),\ldots,\Gamma_{m_{\ell-1}}&(t),\frac{|t_1|^{\ell}}{\ell}\sgn(t_1),\ldots,\frac{|t_{r_p}|^{\ell}}{\ell}\sgn(t_{r_p}),\Gamma_{m_{\ell-1}+r_p+1}(t),\ldots\\
&\ldots,\Gamma_{m_\ell}(t),\frac{|t_{r_p+1}|^{\ell+1}}{\ell+1}\sgn(t_{r_p+1}),\ldots,
\frac{|t_p|^\iota}{\iota}\sgn(t_p)\Big)\,.
\end{split}
\end{equation}
The new parametrization $\gamma$ of $\Sigma_x$ around the origin yields
$\Phi:(-c_1,c_1)^p\to\Sigma$ defined as $\Phi:=l_x\circ\gamma$, that is our ``adapted parametrization'' of
$\Sigma$ around $x$. Taking $r>0$ sufficiently small, we have
\begin{equation}
\frac{\tilde\mu(\Sigma\cap B(x,r))}{r^{D(p)}}=r^{-D(p)}
\int_{\Phi^{-1}(B(x,r))}\|(\der_{y_1}\Phi\wedge\cdots\wedge\der_{y_p}\Phi)(y) \|_{\tilde g}\,dy \,.
\end{equation}
where $\tilde\mu$ is the Riemannian surface measure induced by $\tilde g$
on $\Sigma$. We perform the change of variable $y=\lambda_rt$, where $\lambda_r$ is the
subdilation of the form
\begin{equation}
  \lambda_r(t_1,\ldots,t_p)=(r^\ell t_1,\ldots,r^\ell t_{r_p},r^{\ell+1}t_{r_p+1},\ldots,r^{\ell+1}t_{r_p+n_{\ell+1}},
\ldots,r^{\iota}t_p)
\end{equation}
that yields the formula
\begin{equation}
\frac{\tilde\mu(\Sigma\cap B(x,r))}{r^{D(p)}}=
\int_{\lambda_{1/r}(\Phi^{-1}(B(x,r)))}\|\der_{y_1}\Phi(\lambda_rt)\wedge\cdots\wedge
\der_{y_p}\Phi(\lambda_rt)\|_{\tilde g}\,dt \,.
\end{equation}
The point is then to study the ``behaviour'' of the set $\lambda_{1/r}\big(\Phi^{-1}(B(x,r))\big)$
as $r\to0^+$. To do this, we will use the formula \eqref{Gammat} for $\Gamma$ and the rates of convergence
\eqref{induct0}, taking into account the change of variables \eqref{etat}.
Since $\Phi^{-1}(B(x,r))=\gamma^{-1}(B(0,r))$, it follows that
\begin{equation}
\lambda_{1/r}\big(\Phi^{-1}(B(x,r))\big)=\left\{t\in\R^p:\delta_{1/r}\big(\gamma(\lambda_rt)\big)\in\mathbb B\right\},
\end{equation}
where $\mathbb B=\{z\in\G:d(z,0)<1\}$. We observe that
\[
 \gamma(\lambda_rt)=\Gamma(\zeta(\lambda_rt))=\Gamma(r\,\zeta(t))\,,
\]
therefore the previous rescaled set can be written as follows
\begin{equation}\label{lambda_r}
 \lambda_{1/r}\big(\Phi^{-1}(B(x,r))\big)=\left\{t\in\R^p:\delta_{1/r}\big(\Gamma(r\zeta(t))\big)\in\mathbb B\right\}\,.
\end{equation}
By \eqref{Gammat}, an element $t\in\R^p$ of the previous set is characterized by the property that
\begin{equation}\label{Gammaresc}
\begin{split}
\Big(\frac{\Gamma_1(r\zeta(t))}{r},\ldots,\frac{\Gamma_{m_{\ell-1}}(r\zeta(t))}{r^{\ell-1}},t_1,&
\ldots,t_{r_p},\frac{\Gamma_{m_{\ell-1}+r_p+1}(r\zeta(t))}{r^{\ell}},\ldots\\
&\ldots,\frac{\Gamma_{m_\ell}(r\zeta(t))}{r^\ell},t_{r_p+1},\ldots,t_p\Big)
\end{split}
\end{equation}
belongs to $\mathbb B$. This is a simple consequence of the equalities $\eta(r\zeta(t))=\lambda_r\eta(\zeta(t))=\lambda_rt$.
We now use both \eqref{gilessml} and \eqref{induct0} to conclude that the element represented in \eqref{Gammaresc}
converges to
\begin{equation}\label{Gammaresc0}
\Big(0,\ldots,0,t_1,\ldots,t_{r_p},0,\ldots,0,t_{r_p+1},\ldots,t_p\Big)
\end{equation}
as $r\to0^+$, uniformly with respect to $t$ that varies in a bounded set.
By standard facts on Hausdorff convergence, the previous limit implies the convergence in \eqref{eq:BlwTrsv} where
\begin{equation}\label{defPiSigma}
\Pi_\Si(x):=\{z\in\G:z_1=\dots=z_{m_{\ell-1}}=z_{m_{\ell-1}+r_p+1}=\dots=z_{m_\ell}=0\}\,.
\end{equation}
It can be easily seen that $\Pi_\Si(x)$ is the homogeneous subgroup of $\G$ associated with the Lie subalgebra
\[
\text{span}\:\{X_{m_{\ell-1}+1},X_{m_{\ell-1}+2},\dots, X_{m_{\ell-1}+r_p},X_{m_{\ell}+1},\dots X_n\}\,.
\]
Moreover, the convergence of the element represented in \eqref{Gammaresc} to \eqref{Gammaresc0} also gives
\begin{equation}\label{limmur0}
\lim_{r\to0^+}\frac{\tilde\mu(\Sigma\cap B(x,r))}{r^{D(p)}}=
{\mathcal H}^p_{|\cdot|}(\B\cap S)\,\|(\der_{y_1}\Phi\wedge\cdots\wedge\der_{y_p}\Phi)(0) \|_{\tilde g,x}
\end{equation}
where $S=\{(0,\ldots,0,t_1,\ldots,t_{r_p},0,\ldots,0,t_{r_p+1},\ldots,t_p)\in\R^n:t_1,\ldots,t_p\in\R\}$
and the metric unit ball $\B$ is represented with respect to the same coordinates.
Taking into account \eqref{derv_j} and the matrix \eqref{matriceCtransv}, we have the projection
\[
\pi_{D(p),0}\big((\der_{y_1}\gamma\wedge\cdots\wedge\der_{y_p}\gamma)(0)\big)=
\big(X_{m_{\ell-1}+1}\wedge\cdots\wedge X_{m_{\ell-1}+r_p}\wedge X_{m_\ell+1}\wedge\cdots\wedge X_n\big)(0)
\]
and the formulae $\der_{y_j}\Phi(x)=dl_x\big(\der_{y_j}\gamma(0)\big)$ yield
\[
\pi_{D(p),x}\big((\der_{y_1}\Phi\wedge\cdots\wedge\der_{y_p}\Phi)(0)\big)=
\big(X_{m_{\ell-1}+1}\wedge\cdots\wedge X_{m_{\ell-1}+r_p}\wedge X_{m_\ell+1}\wedge\cdots\wedge X_n\big)(x)\,.
\]
We have the unit tangent $p$-vector
\[
\tau_{\Sigma,\tilde g}(x)=\frac{\!\!\!\!\!(\der_{y_1}\Phi\wedge\cdots\wedge\der_{y_p}\Phi)(0)}
{\|(\der_{y_1}\Phi\wedge\cdots\wedge\der_{y_p}\Phi)(0) \|_{\tilde g,x} }
\]
then the previous equations for projections give
\[
\big\|\big(\tau_{\Sigma,\tilde g}(x)\big)_{D(p),x}\big\|=\frac{1}{\|(\der_{y_1}\Phi\wedge\cdots\wedge\der_{y_p}\Phi)(0) \|_{\tilde g,x} }\,.
\]
As a result, in view of Definition~\ref{metfact}, the limit \eqref{limmur0} proves our last claim \eqref{eq:BlwTrsvMeas}.
\end{proof}

%
%
%
%
%
\section{Negligibility of lower degree points in transversal submanifolds}\label{sec:neglig}
%
%
%
%
%

The aim of this section is to prove Theorem \ref{theo:neglig} for a $C^1$ $p$-dimensional transversal submanifold $\Sigma\subset\G$, where we define
\begin{equation}\label{def:SC}
\SC:=\{x\in\Si:d_\Si(x)<D(p)\}\,.
\end{equation}
Since $\Si$ is transversal, the subset $\SC$ plays the role of a {\em generalized characteristic set} of $\Si$.
Since any left translation is a diffeomorphism, for each point $x\in\Si$ there holds
\begin{equation}\label{eq4}
T_0(x^{-1}\cdot\Si)= dl_{x^{-1}}(T_x\Si)\,.
\end{equation}
Clearly, a basis for $T_0(x^{-1}\cdot\Si)$ is given by
\[
dl_{x^{-1}}(v_1),\dots,dl_{x^{-1}}(v_p)\,,
\]
where the vectors $v_1,\dots, v_p$ are given by Lemma \ref{goodtangentvectors}.
If $v_j=\sum_i C_{ij}X_i(x)$, by the left invariance of $X_i$, we have
\begin{equation}\label{eq4.1}
dl_{x^{-1}}(v_j)=\sum_{i=1}^n C_{ij}X_i(0)\quad\text{for any }j=1,\dots, p\,.
\end{equation}
In particular, $d_{x^{-1}\cdot\Si}(0)=d_\Si(x)$ and $0\in (x^{-1}\cdot\Si)_c$ if and only if $x\in\SC$.

%
Taking into account~\eqref{flauto}, we observe that $\SCr$ can be written as the disjoint union
\begin{equation}\label{unionAB}
\SCr=\SCAr\cup\SCBr\,,
\end{equation}
where we have defined
\begin{equation}\label{defSCABr}
\begin{split}
& \SCAr:=\{x\in \SCr:\exists\; \bj\geq \ell+1\text{ such that }\al_\bj<n_\bj\}\\
& \SCBr:=\{x\in \SCr:\al_j=n_j\ \forall j\geq\ell+1\text{ and }\al_\ell<r_p\}\,.
\end{split}
\end{equation}
The integer $\ell$, depending on $p$, is introduced in \eqref{defell} and the nonnegative integers
$\alpha_1,\ldots,\alpha_\iota$ are defined in Lemma~\ref{goodtangentvectors}. In particular, $\ell$ will be used throughout this section.
We notice that in the case $\ell=1$, we must have $\alpha_\ell=r_p$, hence $\SCBr=\emptyset$.

We begin by making the further assumption that $\Sigma$ is of class $C^1$ and such that $\Si\subset\phi([0,1]^p)$
for some $C^1$-regular map $\phi:[0,1]^p\to\G$. By the uniform differentiability of $\phi$, the boundedness of $\Si$
and the continuity of left translations, the following statement holds: for any $\ep>0$, there exists $\bar r_\ep>0$ such that
\begin{equation}\label{eq5}
|\langle y,w\rangle|\leq \ep r\qquad
\begin{array}{l}
\forall r\in(0,\bar r_\ep),\\
\forall x\in\Si,\ \forall y\in (x^{-1}\cdot\Si) \cap B_E(0,r),\\
\forall w\in \left( T_0(x^{-1}\cdot\Si) \right)^\perp,\ |w|=1\,,
\end{array}
\end{equation}
where $\langle\cdot,\cdot\rangle$ denotes the Euclidean scalar product. The orthogonal space $\left( T_0(x^{-1}\cdot\Si) \right)^\perp$ is understood with respect to the same product. Notice that such coordinates are associated with the basis $X_1,\dots,X_n$ given by Lemma \ref{goodtangentvectors}; in particular, they depend on the chosen basepoint $x\in\Si$.

%
%
%
%
The proof of the negligibility stated in Theorem~\ref{theo:neglig}
stems from the following key lemmata.
The proofs of these lemmata could be rather simplified; however, we present them in a form
which will be helpful for some refinement provided in Subsection~\ref{sec:Hausdorffdim}.
\begin{Lem}\label{lemma2}
Let $\Sigma$ be a $C^1$ submanifold such that $\Sigma\subset\f([0,1]^p)$ for some $C^1$ map $\f:[0,1]^p\to\G$; let $\theta:=1/\ell$. Then, there exists a constant $C_A=C_A(\Si)>0$ such that the following property holds. For any $x,\ep, r$ satisfying
\begin{equation}\label{eq:condlemma2}
x\in\SCAr,\quad \ep\in(0,1)\quad\text{and}\quad 0<r\leq\min\{\bar r_\epsilon,\ep^\ell\},
\end{equation}
the set $(x^{-1}\cdot\Si) \cap B_E(0,r)$ can be covered by a family $\{B_i:i\in I\}$ of CC balls with radius $r^{\theta}$ such that
\[
\# I\leq C_A\: \epsilon\: r^{p-\theta D(p)}\,.
\]
\end{Lem}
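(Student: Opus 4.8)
The plan is to pass from Carnot--Carath\'eodory balls to boxes, read off the tangent structure from Lemma~\ref{goodtangentvectors}, feed in the flatness estimate \eqref{eq5}, and then count the relevant boxes of an anisotropic grid layer by layer. Throughout I would work with the translated submanifold $x^{-1}\cdot\Si$ and its tangent space $T:=T_0(x^{-1}\cdot\Si)$, set $s:=r^{1/\ell}=r^{\theta}$, and use \eqref{eqstar} to enclose each box $\bo(c,s/C_{BB})$ in a single CC ball $B(c,s)$. In this way it suffices to tile $\R^n$ by the grid of boxes of sidelength $\sim s$ (width $\sim s^{j}=r^{j/\ell}$ in the $j$-th layer) and to bound the number of grid boxes meeting $E:=(x^{-1}\cdot\Si)\cap B_E(0,r)$.

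First I would record the coordinate structure at $x\in\SCAr$. By Lemma~\ref{goodtangentvectors} there are a graded basis $X_1,\dots,X_n$ and integers $\alpha_1,\dots,\alpha_\iota$ with $\sum_k\alpha_k=p$ so that $T$ is spanned by vectors $v_j$ whose leading component is a distinguished \emph{principal} coordinate in layer $\sigma_j$, the remaining entries lying in strictly lower layers. Hence, on $T$, every \emph{non-principal} coordinate in a layer $m$ is a linear combination, with coefficients bounded uniformly in $x$ (by the $C^1$-regularity of $\phi$ and compactness), of the principal coordinates sitting in layers $>m$. Combining this with \eqref{eq5}, for every $y\in E$ each non-principal coordinate of $y$ in layer $m$ equals the same linear combination of the principal coordinates of $y$ in layers $>m$, up to an error $O(\ep r)$. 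This is the only place where the flatness \eqref{eq5} enters, and it is the source of the gain $\ep$.

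Then I would count. The principal coordinates act as free parameters, each ranging over an interval of length $\lesssim r$; a principal parameter in layer $m$ therefore meets $\max(1,r^{1-m/\ell})$ grid intervals, i.e.\ $O(1)$ for $m\le\ell$ and $\sim r^{1-m/\ell}$ for $m>\ell$. A non-principal coordinate in layer $m$ is instead not free: once the finitely many higher principal boxes are fixed, it is determined up to $r^{(m+1)/\ell}+O(\ep r)$ by the previous paragraph, so it meets only $\max(1,\,r^{1/\ell}+\ep\,r^{1-m/\ell})$ intervals, which is $O(1)$ for $m\le\ell$ and, using $r\le\ep^{\ell}$, comparable to $\ep\,r^{1-m/\ell}$ for $m>\ell$. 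Since these conditional bounds are uniform over the higher choices, multiplying the contributions is legitimate and gives
\[
\# I\ \lesssim\ \prod_{m>\ell}\big(r^{1-m/\ell}\big)^{\alpha_m}\big(\ep\,r^{1-m/\ell}\big)^{n_m-\alpha_m}
=\ep^{\sum_{m>\ell}(n_m-\alpha_m)}\,r^{\sum_{m>\ell}(1-m/\ell)\,n_m}.
\]
By \eqref{eq0.3ter} the exponent of $r$ equals $p-D(p)/\ell=p-\theta D(p)$, while the definition of $\SCAr$ furnishes some $\bj\ge\ell+1$ with $\alpha_{\bj}<n_{\bj}$, whence $\sum_{m>\ell}(n_m-\alpha_m)\ge1$ and, as $\ep<1$, the $\ep$-power is $\le\ep$. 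This is exactly $\# I\le C_A\,\ep\,r^{p-\theta D(p)}$.

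The hard part will be the extraction of the factor $\ep$. A crude product-over-coordinates estimate treats every layer-$m$ direction as free and yields only the $\ep$-free ``transversal'' count $r^{p-D(p)/\ell}$; the improvement requires the conditional counting above, namely recognizing that the non-principal coordinates of $E$ in each layer are slaved --- up to the flatness error $\ep r$ --- to the higher-layer principal coordinates, so that each deficiency $\alpha_m<n_m$ in a layer $m>\ell$ costs a factor $\ep$ rather than $1$. A secondary technical point, which I would dispatch using the $C^1$-regularity of $\phi$ on the compact cube together with \eqref{eq5} (and, if convenient, \eqref{eqstarstar} to turn the normal-direction control into membership in the refined boxes), is the uniformity in $x\in\SCAr$ of all constants, so that the final $C_A$ depends on $\Si$ alone.
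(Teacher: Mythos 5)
Your counting scheme founders on the uniformity claim that you yourself flag as a ``secondary technical point'': that the slaving coefficients --- the $\ast$ entries of the matrix \eqref{matriceC} produced by Lemma~\ref{goodtangentvectors} --- are bounded uniformly in $x\in\SCAr$ by $C^1$-regularity and compactness. This is false. Those entries come from a pointwise Gaussian elimination whose output is not continuous in $x$; they blow up as the tangent plane degenerates towards a lower stratum, and such degeneration occurs \emph{inside} $\SCA$ itself. For instance, in the Engel group ($n_1=2$, $n_2=n_3=1$, $p=2$, so $\ell=2$, $D(p)=5$), the planes $P_a=\mathrm{span}\{X_1,\,X_3+aX_2\}$ have degree $3$ and configuration $(\al_1,\al_2,\al_3)=(1,1,0)$, hence belong to the $\SCA$ stratum for every $a$, with $\ast$ entry exactly $a$; as $a\to\infty$ they converge to $\mathrm{span}\{X_1,X_2\}$, which is again the tangent plane of an $\SCA$ point (configuration $(2,0,0)$). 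A compact $C^1$ surface can perfectly well carry such a degenerating family of tangent planes. Your argument then breaks in two places at once: fixing the higher principal coordinates within their grid cells determines a non-principal coordinate only up to $\max|\ast|$ times the grid size, and --- more seriously --- the flatness estimate \eqref{eq5} applies to \emph{unit} normal vectors, whereas the graph-type normal encoding your linear relation has norm $\sqrt{1+|\ast|^2}$, so the error in your slaving relation is $O\bigl(\ep r\sqrt{1+|\ast|^2}\bigr)$, not $O(\ep r)$. The resulting constant depends on $x$, which is precisely what the statement (and its later use in Section~\ref{sec:Hausdorffdim}) forbids.

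The paper's proof avoids this trap by using only the \emph{zero pattern} of \eqref{matriceC}, never the size of the $\ast$ entries. Taking $\bj\geq\ell+1$ maximal with $\al_\bj<n_\bj$, the last $\nu=n_\bj+\dots+n_\iota$ rows of the matrix have fewer than $\nu$ nonzero columns, so pure linear algebra yields a genuine unit vector $w=(0,z)$, supported in the layers $\geq\bj>\ell$, orthogonal to $T_0(x^{-1}\cdot\Si)$. Applying \eqref{eq5} to this single $w$ confines $(x^{-1}\cdot\Si)\cap B_E(0,r)$ to a slab of thickness $\sim\ep r$, and the count is then a packing argument: the disjoint balls $B(x_i,r^\theta/5)$ coming from the $5r$-covering theorem all lie in a region $\wt\Om$ whose volume carries exactly one factor of $\ep$, and comparing volumes gives $\#I\lesssim\ep\,r^{p-\theta D(p)}$ with constants depending only on $\Si$, $\G$ and $d$. (Had your slaving been available, it would yield the stronger power $\ep^{\sum_{m>\ell}(n_m-\al_m)}$; but the lemma needs only one factor of $\ep$, and one structurally-given normal direction always exists.) A coordinatewise version of your argument can be rescued only by replacing the slaving relations with such orthogonal unit directions; alternatively, decomposing $\SCA$ into countably many pieces on which the $\ast$ entries are bounded would still give the negligibility of Theorem~\ref{theo:neglig}, but not Lemma~\ref{lemma2} as stated with a uniform $C_A(\Si)$.
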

\begin{proof}
From now on, the numbers $C_i$, with $i=1,2,\dots$, will denote positive constants depending only on $\Si,p,\G$ and the fixed homogeneous distance $d$. For the reader's convenience, we divide the proof into several steps.

{\em Step 1.} By Theorem~\ref{5rcovering}, we get a countable family $\{B(x_i,r^{\theta}):i\in I\}$ such that
\begin{equation}\label{covering}
\left\{\begin{array}{l}
x_i\in (x^{-1}\cdot\Si) \cap B_E(0,r)\\
(x^{-1}\cdot\Si) \cap B_E(0,r) \subset \textstyle\bigcup_{i\in I} B(x_i,r^{\theta})\\
B(x_i, r^{\theta}/5)\cap B(x_h,r^{\theta}/5) = \emptyset\;\mbox{when}\; i\neq h.
\end{array}\right.
\end{equation}
We have to estimate $\# I$. By Lemma \ref{lem:xtilde}, for any $i\in I$ there exists $\tilde x_i$ such that
\begin{equation}\label{eq7}
\begin{split}
&\tilde x_i^1=\dots=\tilde x_i^\ell=0,\quad d(x_i,\tilde x_i)\leq C r^{1/\ell} = C r^{\theta},  \\
&|\tilde x^h_i-x^h_i|\leq C r^2\quad\text{for any }h=\ell+1,\dots,\iota.
\end{split}
\end{equation}
Therefore, taking into account \eqref{eqstar}, we achieve
\begin{equation}\label{eq777}
B(x_i,r^{\theta})\subset B(\tilde x_i,(1+C)r^{\theta}) \subset \bo(\tilde x_i,C_1r^{\theta})\,.
\end{equation}
Let us also point out that both \eqref{eq7} and the fact that $x_i\in B_E(0,r)$ give
\begin{equation}\label{eq9bis}
|\tilde x_i| \leq C_2 r\,.
\end{equation}

{\em Step 2.} Let us prove that there exists $C_3>0$ such that, for any $i\in I$, there holds
\begin{equation}\label{eq8}
\bo(\tilde x_i, C_1r^{\theta})\subset\Om,
\end{equation}
where we have set
\[
\Om := (-C_3r^{\theta},C_3 r^{\theta})^{n_1}\times(-C_3 r^{2\theta},C_3 r^{2\theta})^{n_2}\times\dots\times (-C_3 r^{\ell\theta},C_3 r^{\ell\theta})^{n_\ell}\times \bo_E^{\mu}(0, C_3 r)
\]
and $\mu:=n-m_{\ell}$. To this aim we fix $y\in \bo(0,C_1 r^\theta)$, that is
\begin{equation}\label{eq8.1}
|y^j| < (C_1 r^\theta)^j\quad\forall j=1,\dots,\iota\,,
\end{equation}
and prove that $\tilde x_i\cdot y\in\Om$. By explicit computation
\begin{equation}\label{eq9}
\begin{split}
\tilde x_i\cdot y\, &= (0,\dots,0,\tilde x_i^{\ell+1},\dots,\tilde x_i^\iota)\cdot(y^1,\dots,y^\iota)\\
&= (y^1,\dots,y^\ell,\tilde x_i^{\ell+1}+y^{\ell+1},\tilde x_i^{\ell+2}+y^{\ell+2}+O(r^{1+\theta}),\dots,\tilde x_i^\iota+y^{\iota}+O(r^{1+\theta}))
\end{split}
\end{equation}
where we have used
\begin{itemize}
\item \eqref{eq0.2} for the coordinates in the layers $1,\dots, \ell+1$;
\item \eqref{eq0.1} for the coordinates in the layers $\ell+2,\dots,\iota$, together with \eqref{eq9bis} and the fact that $|y|=O(r^\theta)$.
\end{itemize}
Here and in the sequel, all the quantities $O(\cdot)$ are uniform. From \eqref{eq9} and \eqref{eq8.1} it follows immediately that $\tilde x_i\cdot y\in\Om$, and \eqref{eq8} follows.

{\em Step 3.} We have not used the fact that $x\in\SCAr$ yet. By definition, there exists $\bj\geq \ell+1$ such that $\al_\bj < n_\bj$. We can also assume that $\bj$ is maximum, i.e., that $\al_j=n_j$ for any $j>\bj$; set
\[
\nu:=n_\bj+n_{\bj+1}+\dots+n_\iota = n_\bj+\al_{\bj+1}+\dots+\al_\iota\,.
\]
The last $\nu$ rows of the matrix $C$ given by Lemma \ref{goodtangentvectors} constitute a $\nu\times p$ matrix $M$ of the form
\[
M=\left[\begin{array}{c|c|c|c|c|c|c}
0 & \cdots & 0 & Id_{\alpha_\bj} & 0 & \cdots & 0\\
0 & \cdots & 0 & 0 & \ast & \cdots & \ast\\
\hline 0 & \cdots & 0 &  0 & Id_{n_{\bj+1}} & \cdots & 0 \\
\hline \vdots & \ddots & \vdots &\vdots & \vdots & \ddots & \vdots\\
\hline 0 & \cdots & 0 &  0 & 0 & \cdots & Id_{n_\iota}\\
\end{array}\right]
=
\left[\begin{array}{c|c|c|c|c}
0 & \cdots & 0 & Id_{\alpha_\bj} & 0 \\
0 & \cdots & 0 & 0 & \ast \\
\hline &\vphantom{\vdots}&&&\\
0 & \cdots & 0 &  0 & Id_{n_{\bj+1}+\dots+n_\iota}\\ 
 &&&&
\end{array}\right]\,.
\]
Since $M$ has only $\al_\bj+n_{\bj+1}+\dots+n_\iota < \nu$ nonzero columns, there exists a vector $z\in\R^\nu$ such that $|z|=1$ and $z$ is orthogonal to any of the columns of $M$. Therefore, the vector $w:=(0,z)\in \R^n\equiv\R^{n-\nu}\times\R^\nu$ is orthogonal to any of the columns of $C$. By \eqref{eq4} and \eqref{eq4.1},
taking into account that $X_k(0)=\der_{x_k}$, these columns generate $T_0(x^{-1}\cdot\Si)$. As a result,
since $\bj>\ell$, we are lead to the validity of the following conditions
\begin{equation}\label{wprop}
\left\{\begin{array}{l}
w\in (T_0(x^{-1}\cdot\Si))^\perp\\
\!|w|=1\\
w_1=w_2=\dots=w_{m_\ell}=0
\end{array}\right.\,.
\end{equation}
{\em Step 4.}
To refine the inclusion \eqref{eq8}, we will use the properties \eqref{wprop}.
By \eqref{eq5} one has $|\langle x_i, w\rangle|\leq \ep r \text{ for any }i\in I$. Define $w':=(w_{m_{\ell}+1},w_{m_{\ell}+2},\dots,w_n)\in\R^\mu$, where $\mu=n-m_\ell>\nu$ is the same number of Step 2. By \eqref{eq9} and \eqref{wprop}, for any $y\in \bo(0,C_1 r^\theta)$ we have
\[
\begin{split}
|\langle \tilde x_i\cdot y,w\rangle|\, &=  \big|\big\langle (y^1,\dots,y^\ell,\tilde x_i^{\ell+1}+y^{\ell+1},\tilde x_i^{\ell+2}+y^{\ell+2}+O(r^{1+\theta}),\dots,\tilde x_i^\iota+y^{\iota}+O(r^{1+\theta})),\\
& \hphantom{= \big|\big\langle\ }(0,\dots,0,w^{\ell+1},\dots,w^\iota) \big\rangle\big|\\
& \leq|\langle (\tilde x_i^{\ell+1},\dots,\tilde x_i^\iota) , w' \rangle| +
|\langle (y^{\ell+1},\dots,y^\iota) , w'\rangle| + O(r^{1+\theta})\\
& = |\langle (x_i^{\ell+1},\dots,x_i^\iota) , w' \rangle| + O(r^{(\ell+1)\theta}) + O(r^{1+\theta})\\
&\leq \ep r + O(r^{1+\theta})\,,
\end{split}
\]
where the second equality is justified by \eqref{eq7} and \eqref{eq8.1} and the last inequality follows from $(\ell+1)\theta= 1+\theta$.
Since all the previous $O(\cdot)$s are uniform with respect to the index $i$, we get
\[
|\langle (\tilde x_i\cdot y)_\mu,w'\rangle| \leq \ep r + C_4 r^{1+\theta}\leq (1+C_4)\ep r\,,
\]
where $(\tilde x_i\cdot y)_\mu$ is the vector made by the last $\mu$ coordinates of $(\tilde x_i\cdot y)_\mu$ and we used the fact that, by \eqref{eq:condlemma2}, $r^\theta=r^{1/\ell}\leq\ep$. Thus, by \eqref{eqstarstar} and \eqref{eq8} we obtain that
$ \bo(\tilde x_i, C_1 r^\theta)\subset\wt\Om$, where we have set
\[
\begin{split}
\wt\Om :=  (-C_3r^{\theta},C_3 r^{\theta})^{n_1}  &  \times(-C_3 r^{2\theta},C_3 r^{2\theta})^{n_2}\times\cdots\\
& \times (-C_3 r^{\ell\theta},C_3 r^{\ell\theta})^{n_\ell}\times \bo_{w^{\prime\perp}\oplus\text{span }w'}^{\mu}(0; C_3 r, C_5\ep r)\,.
\end{split}
\]
As a consequence, by \eqref{eq777} we get
$B(x_i,r^\theta/5)\subset \bo(\tilde x_i, C_1 r^\theta)\subset\wt\Om$
for all $i\in I$.

{\em Step 5.} We are ready to estimate $\# I$. The volume of $\wt\Om$ is equal to
\[
a = C_6\, \ep\, r^{\theta(n_1+2n_2\dots+\ell n_\ell)+\mu}
= C_6\, \ep\, r^{\theta(n_1+2n_2\dots+\ell n_\ell)+n_{\ell+1}+\dots+ n_\iota}\,,
\]
while each $B(x_i,r^\theta/5)$ has volume $b= C_7\, r^{\theta(n_1+2n_2+\dots+\iota n_\iota)}$.
Taking into account that the CC balls $B(x_i,r^\theta/5)$ are pairwise disjoint and contained in $\wt\Om$, we have
\[
\begin{split}
\# I  \leq \tfrac a b \stackrel{\hphantom{\eqref{eq0.3ter}}}{=} &\tfrac{C_6}{C_7} \:\ep\, r^{n_{\ell+1}+\dots+n_\iota - \theta \left((\ell+1)n_{\ell+1}+\dots+\iota n_\iota\right)}
\stackrel{\eqref{eq0.3ter}}{=}  \tfrac{C_6}{C_7} \:\ep\, r^{p-r_p -\theta(D(p)-\ell r_p)}\\
\stackrel{\hphantom{\eqref{eq0.3ter}}}{=}&\tfrac{C_6}{C_7} \:\ep\, r^{p-\theta D(p) + (\theta\ell-1)r_p}
\end{split}
\]
which proves the claim and concludes the proof of the lemma.
\end{proof}

While more subtle at certain points, the proof of Lemma~\ref{lemma3} follows the same lines of the previous one. For the reader's benefit, we will try to make the analogies between the two proofs as evident as possible.
\begin{Lem}\label{lemma3}
Under the assumptions of Lemma~\ref{lemma2} and $\ell\geq 2$, there exists $C_B=C_B(\Si)>0$ such that the following property holds. For any $x,\ep,\theta,r$ satisfying
\begin{equation}\label{eq:condlemma3}
x\in\SCBr,\quad \ep\in(0,1),\quad \tfrac{1}{\ell}<\theta\leq\tfrac{1}{\ell-1} \quad \text{and}\quad 0<r\leq\min\{\bar r_\ep, \ep^{1/(\ell\theta-1)}\},
\end{equation}
the set $(x^{-1}\cdot\Si) \cap B_E(0,r)$ can be covered by a family $\{B_i:i\in I\}$ of CC balls with radius $r^{\theta}$ such that
\[
\# I\leq C_{B} \:\ep^H\: r^{p-\theta D(p)-(\ell\theta-1)(n_\ell-r_p)},
\]
where $H=H(x):= n_\ell-\al_\ell$ and the integers $\al_j=\al_j(x)$ are those given by Lemma~\ref{goodtangentvectors}.
\end{Lem}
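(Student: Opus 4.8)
The plan is to imitate the five-step scheme of the proof of Lemma~\ref{lemma2}, modifying the two places where the geometry of $\SCBr$ differs from that of $\SCAr$. The auxiliary points $\tilde x_i$ will now be produced by Lemma~\ref{lem:xtilde} with the index $j=\ell-1$ instead of $j=\ell$, and the normal directions exploited in the refinement step will live in the layers of degree $\geq\ell$ rather than $\geq\ell+1$. Taking $j=\ell-1$ is forced by the different range of $\theta$: since $\theta\leq\frac1{\ell-1}$ we still have $d(x_i,\tilde x_i)\leq C r^{1/(\ell-1)}\leq C r^{\theta}$, so a CC ball of radius $r^\theta$ controls the translation, and, crucially, \eqref{eq7bis} now gives $|\tilde x_i^h-x_i^h|\leq C r^2$ for \emph{every} $h\geq\ell$, so that the $\ell$-th layer of $x_i$ survives in $\tilde x_i$.

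First I would run the $5r$-covering exactly as in Step~1, obtaining CC balls $B(x_i,r^\theta)$ centered at points $x_i\in(x^{-1}\cdot\Si)\cap B_E(0,r)$ with pairwise disjoint $B(x_i,r^\theta/5)$. Since now $\tilde x_i^1=\dots=\tilde x_i^{\ell-1}=0$, the Baker--Campbell--Hausdorff expansion of $\tilde x_i\cdot y$ for $y\in\bo(0,C_1 r^\theta)$ simplifies as in Step~2: the first $\ell-1$ layers equal $y^1,\dots,y^{\ell-1}$ (the polynomials $Q_j$ drop out through \eqref{eq0.2}, all lower layers of $\tilde x_i$ being zero), the $\ell$-th layer equals $\tilde x_i^\ell+y^\ell$, and, using \eqref{eq0.1} together with $|\tilde x_i|=O(r)$, the layers of degree $>\ell$ are $\tilde x_i^h+y^h+O(r^{1+\theta})$. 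This yields the inclusion $\bo(\tilde x_i,C_1 r^\theta)\subset\Om$, where
\[
\Om:=\prod_{j=1}^{\ell-1}(-C_3 r^{j\theta},C_3 r^{j\theta})^{n_j}\times\bo_E^{\mu}(0,C_3 r),\qquad \mu:=n-m_{\ell-1}=\textstyle\sum_{j\geq\ell}n_j,
\]
so that the first $\ell-1$ layers stay anisotropically thin while layers of degree $\geq\ell$ are controlled only at scale $r$.

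The heart of the argument is the production of the normal directions from $x\in\SCBr$, i.e.\ from $\al_j=n_j$ for $j\geq\ell+1$ and $\al_\ell<r_p$. I would look at the submatrix $M$ of the matrix $C$ of Lemma~\ref{goodtangentvectors} formed by the rows of degree $\geq\ell$, a $\mu\times p$ matrix. By the echelon structure of \eqref{matriceC}, a column of $M$ is nonzero only if its subdegree is $\geq\ell$, so $M$ has at most $\al_\ell+\sum_{j>\ell}n_j=\mu-H$ nonzero columns, with $H=n_\ell-\al_\ell\geq1$. Hence $\mathrm{rank}\,M\leq\mu-H$ and there exist orthonormal $z_1,\dots,z_H\in\R^\mu$ orthogonal to every column of $M$; the vectors $w_s:=(0,z_s)\in\R^{m_{\ell-1}}\times\R^\mu$ then form an orthonormal system in $(T_0(x^{-1}\cdot\Si))^\perp$, each supported in the layers of degree $\geq\ell$. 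Feeding these into the transversality estimate \eqref{eq5} and repeating Step~4, I would bound $|\langle\tilde x_i\cdot y,w_s\rangle|$: since $\tilde x_i$ and $x_i$ agree up to $O(r^2)$ on layers $\geq\ell$, the scalar product inherits the bound $\ep r$ from $\langle x_i,w_s\rangle$, up to errors $O(r^2)+O(r^{\ell\theta})+O(r^{1+\theta})$, the genuinely new term $O(r^{\ell\theta})$ coming from the surviving $\ell$-th layer $y^\ell$. All three errors are $\leq C\ep r$: the bound $r\leq\ep^{1/(\ell\theta-1)}$ gives $r^{\ell\theta}\leq\ep r$ (and $r\leq\ep$), while $\theta\leq\frac1{\ell-1}$ gives $1+\theta\geq\ell\theta$, so $r^{1+\theta}\leq r^{\ell\theta}$. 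Thus $|\langle(\tilde x_i\cdot y)_\mu,z_s\rangle|\leq C_5\ep r$ for all $s$, and \eqref{eqstarstar} refines the inclusion to $B(x_i,r^\theta/5)\subset\wt\Om$, obtained from $\Om$ by replacing its factor $\bo_E^\mu(0,C_3 r)$ with $\bo_{W^\perp\oplus W}^{\mu}(0;C_3 r,C_5\ep r)$, where $W=\mathrm{span}\{z_1,\dots,z_H\}$.

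Finally I would close with the volume count of Step~5. The $H$ constrained directions shrink the corresponding factor from size $r$ to size $\ep r$, giving $\mathrm{vol}(\wt\Om)=C\,\ep^H\, r^{\theta\sum_{j=1}^{\ell-1}jn_j+\mu}$, while each disjoint ball $B(x_i,r^\theta/5)$ has volume $C\,r^{\theta Q}$ with $Q=\sum_j jn_j$. Dividing and using $\mu=\sum_{j\geq\ell}n_j$ together with \eqref{eq0.3ter}, the resulting $r$-exponent simplifies to
\[
\sum_{j\geq\ell}n_j-\theta\sum_{j\geq\ell}jn_j=p-\theta D(p)-(\ell\theta-1)(n_\ell-r_p),
\]
which is exactly the claimed bound $\# I\leq C_B\,\ep^H\, r^{p-\theta D(p)-(\ell\theta-1)(n_\ell-r_p)}$. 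The hypothesis $\ell\geq2$ enters only to make $\SCBr$ possibly nonempty and the index $j=\ell-1$ meaningful. I expect the main obstacle to be the bookkeeping of the refinement step: one must verify that the new error $O(r^{\ell\theta})$ is absorbed into $\ep r$, which is precisely what dictates the two-sided restriction $\frac1\ell<\theta\leq\frac1{\ell-1}$ on $\theta$ and the sharp threshold $r\leq\ep^{1/(\ell\theta-1)}$ in the hypotheses.
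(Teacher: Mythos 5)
Your proposal is correct and follows essentially the same route as the paper's own proof: the identical five-step scheme (the $5r$-covering, Lemma~\ref{lem:xtilde} applied with $j=\ell-1$, the anisotropic box $\Om$, the $H=n_\ell-\al_\ell$ normal directions extracted from the last $\mu=n-m_{\ell-1}$ rows of the matrix $C$ of Lemma~\ref{goodtangentvectors}, and the final volume comparison), with the same absorption of the error terms via $\ell\theta\leq 1+\theta$ and $r^{\ell\theta-1}\leq\ep$. There are no gaps; your bookkeeping of the extra $O(r^2)$ and $O(r^{\ell\theta})$ errors and the algebraic reduction of the exponent to $p-\theta D(p)-(\ell\theta-1)(n_\ell-r_p)$ match the paper's Steps 4 and 5.
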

\begin{proof}
%
We follow the same convention of Lemma~\ref{lemma2} about the constants $C_i$.

{\em Step 1.} By the $5r$-covering theorem we can cover $(x^{-1}\cdot\Si) \cap B_E(0,r)$ by a family of CC balls $\{B(x_i,r^{\theta}):i\in I\}$ such that \eqref{covering} holds. We have once more to estimate $\# I$.
By Lemma~\ref{lem:xtilde}, for any $i\in I$ there exists $\tilde x_i$ such that
\begin{equation} \label{eq14}
\begin{array}{l}
\tilde x_i^1=\dots=\tilde x_i^{\ell-1}=0,\quad d(x_i,\tilde x_i)\leq C r^{1/(\ell-1)} \leq C r^\theta\quad
\mbox{and}   \\
|\tilde x^h_i-x^h_i|\leq C r^2\quad\text{for any }h=\ell,\dots,\iota.
\end{array}
\end{equation}
Therefore $B(x_i,r^{\theta}/5)\subset B(x_i,r^{\theta})\subset B(\tilde x_i,(1+C)r^{\theta}) \subset \bo(\tilde x_i,C_{8}r^{\theta})$.
Again
\begin{equation}\label{eq9bislemma3}
|\tilde x_i| \leq C_2 r\,.
\end{equation}

{\em Step 2.} Let us prove that there exists $C_{9}>0$ such that, for any $i\in I$, there holds
\begin{equation}\label{eq15-ep}
\bo(\tilde x_i, C_{8}r^{\theta})\subset\Om,
\end{equation}
where now
\begin{equation}\label{eq15}
\begin{split}
\Om := (-C_{9}r^{\theta},C_{9} r^{\theta})^{n_1}  &  \times(-C_{9} r^{2\theta},C_{9} r^{2\theta})^{n_2}\times\cdots\\
& \times (-C_{9} r^{(\ell-1)\theta}, C_{9} r^{(\ell-1)\theta})^{n_{\ell-1}}\times \bo_E^{\mu}(0, C_{9} r)
\end{split}
\end{equation}
and $\mu:=n-m_{\ell-1}=n_{\ell}+\dots+n_\iota$. As before we fix $y\in \bo(0,C_{8} r^\theta)$,
\begin{equation}\label{eq15.1}
|y^j| < (C_{8} r^\theta)^j\quad\forall j=1,\dots,\iota
\end{equation}
and prove that $\tilde x_i\cdot y\in\Om$. Reasoning as in Step 2 in the proof of Lemma~\ref{lemma2} we get
\begin{equation}\label{eq16}
\begin{split}
\tilde x_i\cdot y\, &= (0,\dots,0,\tilde x_i^{\ell},\dots,\tilde x_i^\iota)\cdot(y^1,\dots,y^\iota)\\
&= (y^1,\dots,y^{\ell-1},\tilde x_i^{\ell}+y^{\ell},\tilde x_i^{\ell+1}+y^{\ell+1}+O(r^{1+\theta}),\dots,\tilde x_i^\iota+y^{\iota}+O(r^{1+\theta}))
\end{split}
\end{equation}
where we have used \eqref{eq0.2}, \eqref{eq0.1}, \eqref{eq9bislemma3} and the fact that $|y|=O(r^\theta)$.
All the quantities $O(\cdot)$ are uniform.
The inclusion \eqref{eq15-ep} follows from \eqref{eq9bislemma3}, \eqref{eq16} and the fact that
\[
|y^j|< (C_{8} r^\theta)^j = C_{8}^j r^{j\theta} \leq C_{8}^j r^{j/\ell}\leq C_{8}^j r     \quad\forall j=\ell,\dots,\iota\,.
\]

{\em Step 3.} Since $x\in\SCBr$ we have by definition
\[
\al_\ell<r_p\quad\text{and}\quad \al_j=n_j\ \forall j\geq\ell+1\,.
\]
Therefore the last $\mu$ rows of the matrix $C$ from Lemma~\ref{goodtangentvectors} constitute a $\mu\times p$ matrix $M$ of the form
\[
M=\left[\begin{array}{c|c|c|c|c|c|c}
0 & \cdots & 0 & Id_{\alpha_\ell} & 0 & \cdots & 0\\
0 & \cdots & 0 & 0 & \ast & \cdots & \ast\\
\hline 0 & \cdots & 0 &  0 & Id_{n_{\ell+1}} & \cdots & 0 \\
\hline \vdots & \cdots & \vdots &\vdots & \vdots & \ddots & \vdots\\
\hline 0 & \cdots & 0 &  0 & 0 & \cdots & Id_{n_\iota}\\
\end{array}\right]
=
\left[\begin{array}{c|c|c|c|c}
0 & \cdots & 0 & Id_{\alpha_\ell} & 0 \\
0 & \cdots & 0 & 0 & \ast \\
\hline &\vphantom{\vdots}&&&\\
0 & \cdots & 0 &  0 & Id_{n_{\ell+1}+\dots+n_\iota}\\ 
 &&&&
\end{array}\right]
\]
There are $\al_\ell+n_{\ell+1}+\dots+n_\iota$ nonzero columns of $M$; therefore, the columns of $M$ span a vector subspace of $\R^\mu$ of dimension at most $\al_\ell+n_{\ell+1}+\dots+n_\iota$. Since
\[
\mu-(\al_\ell+n_{\ell+1}+\dots+n_\iota)= n_\ell-\al_\ell =H ,
\]
it follows that there exist $H$ linearly independent vectors $z_1,\dots,z_H\in \R^\mu$ such that $|z_k|=1$ and $z_k$ is orthogonal to any of the columns of $M$ for any $k=1,\dots, H$. In particular, the unit vectors
\[
w_k:=(0,z_k)\in \R^n\equiv\R^{n-\mu}\times\R^\mu,\qquad k=1,\dots, H
\]
are orthogonal to any of the columns of $C$, which form a basis of $T_0(x^{-1}\cdot\Si)$. Setting $W:=\text{span}(w_1,\dots,w_H)$ we have
\[
W\subset T_0(x^{-1}\cdot\Si)^\perp\quad\text{and}\quad \dim W=H\geq 1\,;
\]
moreover, any vector $w\in W$ is of the form
\begin{equation}\label{wprime}
w=(0,\dots,0,w^\ell,\dots,w^\iota)=(0,w')\in \R^{m_{\ell-1}}\times\R^{\mu}\,.
\end{equation}

{\em Step 4.} Again we want to refine the inclusion \eqref{eq15-ep}. By \eqref{eq5} there holds
\[
|\langle x_i, w\rangle|\leq \ep r\qquad \forall\; i\in I,\ \forall\; w\in W\text{ with }|w|=1\,.
\]
Recalling \eqref{eq16} and writing $w=(0,w')\in\R^{m_{\ell-1}}\times\R^{\mu}$ as in \eqref{wprime}, for any $y\in \bo(0,C_{8} r^\theta)$ we have
\[
\begin{split}
|\langle \tilde x_i\cdot y,w\rangle|\, &=  \big|\big\langle (y^1,\dots,y^{\ell-1},\tilde x_i^{\ell}+y^{\ell},\tilde x_i^{\ell+1}+ y^{\ell+1} +O(r^{1+\theta}),\dots,\tilde x_i^\iota+y^{\iota}+O(r^{1+\theta})),\\
\,& \hphantom{=  \big|\big\langle \ }(0,\dots,0,w^{\ell},\dots,w^\iota) \big\rangle\big|\\
& \leq|\langle (\tilde x_i^{\ell},\dots,\tilde x_i^\iota) , w' \rangle| +
|\langle (y^{\ell},\dots,y^\iota) , w'\rangle| + O(r^{1+\theta})\\
& = |\langle (x_i^{\ell},\dots,x_i^\iota) , w' \rangle| + O(r^{\ell\theta})+ O(r^{1+\theta})\\
&\leq \ep r + O(r^{\ell\theta})+ O(r^{1+\theta})\qquad\forall w\in W,|w|=1
\end{split}
\]
where we used \eqref{eq14} and \eqref{eq15.1}. Since
\[
\ell\theta = (\ell-1)\theta + \theta \leq 1+\theta,
\]
we have $r^{1+\theta}\leq r^{\ell\theta}$ and thus, since all the $O(\cdot)$s are uniform,
\[
|\langle \tilde x_i\cdot y,w\rangle| \leq \ep r + C_{10}r^{\ell\theta} \leq \max\{\ep,C_{10}r^{\ell\theta-1}\}r \leq  C_{11}\,\ep\, r \qquad\forall w\in W,|w|=1,
\]
the last inequality following from \eqref{eq:condlemma3}. Using \eqref{eqstarstar} we can then refine \eqref{eq15} to obtain
\[
B(x_i,r^\theta/5) \subset  \bo(\tilde x_i, C_{8} r^\theta)\subset\wt\Om\qquad\forall i\in I
\]
where
\[
\begin{split}
\wt\Om :=(-C_{9}r^{\theta},C_{9} r^{\theta})^{n_1} &   \times(-C_{9} r^{2\theta},C_{9} r^{2\theta})^{n_2}\times\cdots\\
& \times (-C_{9} r^{(\ell-1)\theta}, C_{9} r^{(\ell-1)\theta})^{n_{\ell-1}}\times \bo_{W^\perp\oplus W}^{\mu}(0; C_{9} r,C_{11}\ep r)\,.
\end{split}
\]

{\em Step 5.} We can now estimate $\# I$. Since $\dim W=H$, the volume of $\wt\Om$ is
\[
a\, = C_{12}\, \ep^H\, r^{\theta(n_1+2n_2\dots+(\ell-1) n_{\ell-1})+\mu}= C_{12}\, \ep^H\, r^{\theta(n_1+2n_2\dots+(\ell-1) n_{\ell-1})+n_{\ell}+\dots+n_\iota}\,,
\]
while each ball $B(x_i,r^\theta/5)$ has volume $b= C_{7} r^{\theta(n_1+2n_2+\dots+\iota n_\iota)}$.
Since the CC balls $B(x_i,r^\theta/5)$ are pairwise disjoint and contained in $\wt\Om$, we have
\begin{eqnarray*}
\# I  \leq \tfrac a b & \stackrel{\hphantom{\eqref{eq0.3ter}}}{=}& \tfrac{C_{12}}{C_{7}}\, \ep^H\,   r^{n_{\ell}+\dots+n_\iota - \theta(\ell n_\ell+\dots+\iota n_\iota)}\\
& \stackrel{{\eqref{eq0.3ter}}}{=} & \tfrac{C_{12}}{C_{7}} \, \ep^H\,  r^{n_\ell+p-r_p -\theta[\ell(n_\ell-r_p)+\ell r_p +(\ell+1)n_{\ell+1}+\dots+\iota n_\iota]}\\
& \stackrel{{\eqref{eq0.3ter}}}{=} & \tfrac{C_{12}}{C_{7}} \, \ep^H\,  r^{p+(n_\ell-r_p) -\theta[\ell(n_\ell-r_p)+D(p)]}\\
& \stackrel{\hphantom{\eqref{eq0.3ter}}}{=}& \tfrac{C_{12}}{C_{7}} \, \ep^H\,  r^{p-\theta D(p) -(\ell\theta-1)(n_\ell-r_p)}\,,
\end{eqnarray*}
as claimed.
\end{proof}

\begin{Lem}\label{intermediatelemma}
Let $\Sigma$ be a $C^1$ submanifold such that $\Si\subset\f([0,1]^p)$ for a $C^1$ map $\f:[0,1]^p\to\G$. Then $\Haus^{D(p)}(\SC)=0$.
\end{Lem}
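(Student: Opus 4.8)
The plan is to prove the two parts of the decomposition \eqref{unionAB} separately, namely that $\Haus^{D(p)}(\SCA)=0$ and $\Haus^{D(p)}(\SCB)=0$; since $\SC=\SCA\cup\SCB$, these two facts give the claim. In both cases the scheme is identical: Lemmata~\ref{lemma2} and~\ref{lemma3} only control the \emph{local} blow-up $(x^{-1}\cdot\Si)\cap B_E(0,r)$ at a single point $x$, so I would first glue these local coverings into a global covering of the set under consideration, and then tune the free parameters so that the associated $D(p)$-dimensional Carnot--Carath\'eodory premeasure stays bounded as the covering radius tends to $0$.

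The gluing mechanism is the following. Fix $\ep\in(0,1)$ and let $r>0$ be small. Since $\Si\subset\f([0,1]^p)$ with $\f$ Lipschitz on the compact cube $[0,1]^p$, the image $\f([0,1]^p)$, and hence the set to be covered, can be covered by at most $C\,r^{-p}$ Euclidean balls of radius comparable to $r$; refining this family, I may assume these balls are $B_E(x_j,cr)$ with centers $x_j$ lying in the set under consideration. Because $d$ is left invariant and $l_{x_j}(y)=x_j+y+Q(x_j,y)$ with $Q(x_j,y)=O(|y|)$ uniformly on the bounded set $\Si$, each $l_{x_j}$ is bi-Lipschitz near the origin, so $B_E(x_j,cr)\subset l_{x_j}(B_E(0,Cr))$ and therefore $l_{x_j^{-1}}\big(\Si\cap B_E(x_j,cr)\big)\subset (x_j^{-1}\cdot\Si)\cap B_E(0,Cr)$. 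Applying the relevant lemma at the center $x_j$ and transporting the resulting CC balls back by $l_{x_j}$ (which preserves CC balls and their radii) covers $\Si\cap B_E(x_j,cr)$ by CC balls of radius comparable to $r^\theta$.

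For $\SCA$ I would take $\theta=1/\ell$ and apply Lemma~\ref{lemma2} at each center, so the total number of CC balls is
\[
\#\{\text{CC balls}\}\ \lesssim\ r^{-p}\cdot C_A\,\ep\,r^{p-\theta D(p)}\ =\ C_A\,\ep\,r^{-\theta D(p)}\,,
\]
and since each such ball has diameter comparable to $r^\theta$,
\[
\Haus^{D(p)}_{Cr^\theta}(\SCA)\ \lesssim\ C_A\,\ep\,r^{-\theta D(p)}\,(r^\theta)^{D(p)}\ =\ C_A\,\ep\,.
\]
The admissibility constraint $0<r\le\min\{\bar r_\ep,\ep^\ell\}$ holds for all small $r$ at fixed $\ep$, while the covering radius $r^\theta\to0$; letting first $r\to0^+$ and then $\ep\to0^+$ gives $\Haus^{D(p)}(\SCA)=0$.

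The delicate case is $\SCB$, which is nonempty only when $\ell\ge2$. Here Lemma~\ref{lemma3} produces the extra factor $r^{-(\ell\theta-1)(n_\ell-r_p)}$, which would blow up as $r\to0^+$ if $\theta$ were held fixed. The key idea is to let $\theta$ depend on $r$ and $\ep$ so as to saturate the admissibility condition $r\le\ep^{1/(\ell\theta-1)}$: choosing
\[
\theta=\theta(r)=\frac1\ell\Big(1+\frac{\ln\ep}{\ln r}\Big)\,,
\]
one has $\theta\in(1/\ell,1/(\ell-1)]$ for all small $r$ and $r^{\ell\theta-1}=\ep$ exactly, whence $r^{-(\ell\theta-1)(n_\ell-r_p)}=\ep^{-(n_\ell-r_p)}$. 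Using the uniform lower bound $H=n_\ell-\al_\ell\ge n_\ell-r_p+1$, valid at every center of $\SCB$ because $\al_\ell<r_p$ there, the same gluing yields a total of $\lesssim r^{-p}\,C_B\,\ep^{\,n_\ell-r_p+1}\,r^{\,p-\theta D(p)-(\ell\theta-1)(n_\ell-r_p)}$ CC balls of radius $\sim r^\theta$, and therefore
\[
\Haus^{D(p)}_{Cr^\theta}(\SCB)\ \lesssim\ C_B\,\ep^{\,n_\ell-r_p+1}\,r^{-(\ell\theta-1)(n_\ell-r_p)}\ =\ C_B\,\ep^{\,n_\ell-r_p+1}\,\ep^{-(n_\ell-r_p)}\ =\ C_B\,\ep\,.
\]
Since $r^\theta\to0^+$ as $r\to0^+$, letting $r\to0^+$ and then $\ep\to0^+$ gives $\Haus^{D(p)}(\SCB)=0$. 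The heart of the argument, and the step I expect to be the main obstacle to make fully rigorous, is precisely this coupling of the three parameters in the $\SCB$ case: it is the strict inequality $\al_\ell<r_p$, equivalently $r_p-\al_\ell\ge1$, that converts the borderline factor $\ep^{-(n_\ell-r_p)}$ into an overall positive power of $\ep$ and thus forces negligibility.
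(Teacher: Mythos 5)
Your proposal is correct and follows essentially the same route as the paper: the same splitting $\SC=\SCA\cup\SCB$, the same gluing of the local coverings from Lemmata~\ref{lemma2} and~\ref{lemma3} into $\sim r^{-p}$ pieces (the paper subdivides $[0,1]^p$ into subcubes and uses \eqref{eq6}, which is equivalent to your ball covering of the image), and the same parameter choices, since your $\theta(r)=\tfrac1\ell\bigl(1+\tfrac{\ln\ep}{\ln r}\bigr)$ is exactly the paper's $\theta=(1+\la)/\ell$ with $\ep=r^\la$, saturating $r^{\ell\theta-1}=\ep$. The final cancellation via $H\geq n_\ell-r_p+1$ is also identical to the paper's estimate \eqref{lupo}.
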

\begin{proof}
Clearly, it will be enough to show that
\begin{equation}\label{eq:neglAB}
\Haus^{D(p)}(\SCA)=0\quad\text{and}\quad \Haus^{D(p)}(\SCB)=0.
\end{equation}

{\em Step 1.} We start by proving the first equality in \eqref{eq:neglAB}; let us follow the same convention of Lemmata~\ref{lemma2} and~\ref{lemma3} about the constants $C_i$.

Let $\ep\in (0,1)$ and $r\in(0,\min\{\bar r_\epsilon,\ep^\ell\}]$ be fixed. Since $(x,y)\to x^{-1}y$ is locally Lipschitz and $\phi$ is Lipschitz, both with respect to the Euclidean distance, we obtain $C_{13}>0$ such that
\begin{equation}\label{eq6}
\text{if $z_1,z_2\in[0,1]^p$ and $|z_1-z_2|\leq C_{13} r,\quad$ then }|\f(z_1)^{-1}\cdot\f(z_2)| < r\,.
\end{equation}
Let us divide $[0,1]^p$, in a standard fashion, into a family of closed subcubes of diameter not greater than $C_{13}r$; in this way there will be less than $C_{14} r^{-p}$ such subcubes. Let $(Q_j)_{j\in J}$ be the family of those subcubes with the property that
\[
\f(Q_j)\cap \SCAr\neq \emptyset
\]
and fix $x_j\in\f(Q_j)\cap\SCAr$. By \eqref{eq6} we have
\[
x_j^{-1}\cdot \f(Q_j) \subset (x^{-1}_j\cdot\Si) \cap B_E(0,r)\,.
\]
Writing $\theta:=1/\ell$, Lemma \ref{lemma2} ensures that $x_j^{-1}\cdot \f(Q_j)$ can be covered by (at most) $C_A\ep r^{p-\theta D(p)}$ balls of radius $r^{\theta}$; by left invariance, the same holds for $\f(Q_j)$. In particular, since $\SCAr \subset \cup_{j\in J}\f(Q_j)$ and $\# J\leq C_{14} r^{-p}$, we have that, for any $r\in(0,\min\{\bar r_\epsilon,\ep^\ell\}]$, the set $\SCAr$ can be covered by a family of CC balls with radius $r^{\theta}$ of cardinality controlled by $C_AC_{14} \ep r^{-\theta D(p)}$. Therefore
\[
\Haus^{D(p)}_{2r^{1/\ell}}(\SCAr)\leq C_AC_{14}\ep r^{-\theta{D(p)}} (2r^{\theta})^{D(p)}= 2^{D(p)}C_AC_{14}\ep
\]
whence, letting $r\to0^+$,
\[
\Haus^{D(p)}(\SCAr)\leq 2^{D(p)}C_AC_{14}\ep\,.
\]
The first part of \eqref{eq:neglAB} follows by the arbitrarity of $\ep$.

{\em Step 2.} Let us prove the second equality in \eqref{eq:neglAB}. Let $\ep\in(0,1)$ and $r\in(0, \min\{\bar r_\ep, \ep^{\ell-1}\} ]$ be fixed; we have $\ep=r^\la$ for a suitable $\la=\la(r)\in(0,\tfrac{1}{\ell-1}]$. Define $\theta=\theta(r):=\tfrac{1+\la}{\ell}$ and observe that $1/\ell<\theta\leq 1/(\ell-1)$.
As a result, we have
\begin{equation}\label{eqepeta222}
r^{\ell\theta-1}=r^\la=\ep\,;
\end{equation}
in particular, $\ep^{1/(\ell\theta-1)}=r\leq \bar r_\ep$ and  the conditions in \eqref{eq:condlemma3} are satisfied. As before, we divide $[0,1]^p$ into a family of (at most) $C_{14} r^{-p}$ closed subcubes of diameter not greater than $C_{13}r$. Let $(Q_k)_{k\in K}$ be the family of those subcubes with the property that
\[
\f(Q_k)\cap \SCBr\neq \emptyset
\]
and fix $x_k\in\f(Q_k)\cap\SCAr$. By \eqref{eq6} we have again
\[
x_k^{-1}\cdot \f(Q_k) \subset (x^{-1}_k\cdot\Si) \cap B_E(0,r)
\]
so that, by Lemma \ref{lemma3}, $x_k^{-1}\cdot \f(Q_k)$ can be covered by no more than
\[
C_B\ep^{H(x_k)} r^{p-\theta D(p)-(\ell\theta-1)(n_\ell-r_p)}
\]
balls of radius $r^{\theta}=\ep^{1/\ell}r^{1/\ell}$; by left invariance, the same holds for $\f(Q_k)$. Notice that
\[
H(x_k)= n_\ell-\al_\ell(x_k)\geq n_\ell-r_p+1\qquad\forall k\in K\,,
\]
i.e., $\f(Q_k)$ can be covered by (at most) $C_B\ep^{n_\ell-r_p+1} r^{p-\theta D(p)-(\ell\theta-1)(n_\ell-r_p)}$ balls of radius $r^\theta$. As before, this implies that
\[
\# K \leq C_BC_{14} \ep^{n_\ell-r_p+1} r^{-\theta D(p)-(\ell\theta-1)(n_\ell-r_p)}
\]
whence, using \eqref{eqepeta222},
\begin{equation}\label{lupo}
\begin{split}
\Haus^{D(p)}_{2r^{\theta}}(\SCBr)\leq & C_BC_{14}\ep^{n_\ell-r_p+1} r^{-\theta D(p)}r^{-(\ell\theta-1)(n_\ell-r_p)} (2r^{\theta})^{D(p)}\\
= & 2^{D(p)}C_BC_{14}\ep^{n_\ell-r_p+1} \ep^{-(n_\ell-r_p)}\\
= & 2^{D(p)}C_BC_{14}\ep\,.
\end{split}
\end{equation}
Observing that
\[
\lim_{r\to 0^+} r^{\theta} = \lim_{r\to 0^+} r^{1/\ell}r^{\la(r)/\ell} = \lim_{r\to 0^+} r^{1/\ell}\ep^{1/\ell}=0
\]
we can let $r\to0^+$ in \eqref{lupo} to obtain
\[
\Haus^{D(p)}(\SCBr)\leq 2^{D(p)}C_BC_{14}\ep\,.
\]
This proves the second equality in \eqref{eq:neglAB} and completes the proof.
\end{proof}

\begin{rem}\label{rem:conseqclaims}{\rm
We point out for future references the following two facts proved, respectively, in Step 1 and Step 2 of the proof of Lemma \ref{intermediatelemma}. Let $\ep\in(0,1)$ be fixed and assume that $\Sigma$ is a $C^1$ submanifold such that $\Si\subset\f([0,1]^p)$ for a $C^1$ map $\f:[0,1]^p\to\G$; then
\begin{equation}\label{conseqclaim1}
\begin{tabular}{p{13cm} }
for any $r\in(0,\min\{\bar r_\epsilon,\ep^\ell\}]$, the set $\SCAr$ can be covered by a family of CC balls with radius $r^{1/\ell}$ of cardinality at most $C_AC_{14} \ep r^{-D(p)/\ell}$.
\end{tabular}
\end{equation}
and
\begin{equation}\label{conseqclaim2}
\begin{tabular}{p{13cm} }
for any $r\in(0, \min\{\bar r_\ep, \ep^{\ell-1}\} ]$, the set $\SCBr$ can be covered by a family of CC balls, with radius $\ep^{1/\ell}r^{1/\ell}$, of cardinality at most $C_BC_{14}\ep\,\ep^{-D(p)/\ell}r^{-D(p)/\ell}$.
\end{tabular}
\end{equation}
In \eqref{conseqclaim2}, we used the fact that the cardinality of the involved family is controlled by
\[
\begin{split}
&C_BC_{14} \ep^{n_\ell-r_p+1}\, r^{-\theta D(p)-(\ell\theta-1)(n_\ell-r_p)}\\
= & C_BC_{14} \ep^{n_\ell-r_p+1}\, \ep^{-D(p)/\ell}\, r^{-D(p)/\ell}\,\ep^{-(n_\ell-r_p)}\\
=& C_BC_{14} \ep\, \ep^{-D(p)/\ell}\,  r^{-D(p)/\ell}\,,
\end{split}
\]
where we also utilized the equalities $r^\theta=\ep^{1/\ell}r^{1/\ell}$ and $r^{\ell\theta-1}=\ep$.
}\end{rem}

The proof of Theorem \ref{theo:neglig} is now at hand.

\begin{proof}[Proof of Theorem \ref{theo:neglig}]
The theorem is an easy consequence of Lemma \ref{intermediatelemma} and a standard localization argument.
\end{proof}

%
%
%
%
Actually, Theorem \ref{theo:neglig} can be generalized to Lipschitz $p$-dimensional submanifolds; recall that the singular set $\SO$ was defined at the beginning of Section \ref{Subsect:Deg}.
Clearly, the definition of $\SC$ given at \eqref{def:SC} for $C^1$ submanifolds extends to Lipschitz submanifolds
considering the subset $\Sigma\setminus\SO$ of regular points, since the pointwise degree is defined
by the existence of the pointwise tangent space.
\begin{The}\label{theo:negligLip}
Let $\Si\subset\G$ be a $p$-dimensional Lipschitz submanifold, let  $\SO$ be its singular set and 
denote by $\SC$ be the subset of points in $\Si\setminus\SO$ whose degree is less than $D(p)$. 
It follows that
\begin{equation}\label{eqneglLip}
\Haus^{D(p)}(\SO\cup\SC)=0\,.
\end{equation}
\end{The}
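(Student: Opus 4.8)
The plan is to reduce the Lipschitz statement to the $C^1$ case already settled in Theorem~\ref{theo:neglig}, splitting the target set into its two natural pieces $\SO$ and $\SC$. The contribution of the singular set is immediate: by Proposition~\ref{prop:negligibilitySigma0} we already have $\Haus^{D(p)}(\SO)=0$, so by countable subadditivity it suffices to prove $\Haus^{D(p)}(\SC)=0$, where $\SC\subset\Si\setminus\SO$ now denotes the set of regular points whose (well-defined) degree is strictly less than $D(p)$.

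First I would localize: covering $\Si$ by countably many charts, I may assume that $\Si\setminus\SO$ is parametrized by a single Lipschitz map $\phi\colon A\to\G$ with $A\subset\R^p$ a bounded open set. The idea is then to decompose $\phi$ into $C^1$ pieces by means of the Whitney--Lusin $C^1$ approximation theorem (see e.g. \cite{Federer69}): there exist countably many $C^1$ maps $\phi_k\colon\R^p\to\G$ and measurable sets $A_k\subset A$ such that $\phi=\phi_k$ on $A_k$ and $\Haus^p_{|\cdot|}\big(A\setminus\bigcup_k A_k\big)=0$. By Rademacher's theorem $\phi$ is differentiable $\Haus^p_{|\cdot|}$-a.e.\ on $A$, and at every density point of $A_k$ the classical differential of $\phi_k$ coincides with that of $\phi$. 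Hence, discarding the $\Haus^p_{|\cdot|}$-null set of non-density or non-differentiability points, at $\Haus^p_{|\cdot|}$-a.e.\ point of each $A_k$ the tangent plane $T_{\phi(t)}\Si$ equals the tangent plane of the $C^1$ image at the corresponding point $\phi_k(t)$.

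Since, by Lemma~\ref{goodtangentvectors}, the degree $d_\Si(\cdot)$ is a pointwise quantity depending only on the tangent $p$-plane, this coincidence of tangent planes forces the degrees of $\Si$ at $\phi(t)$ and of the $C^1$ piece at $\phi_k(t)$ to agree at these points. After a further countable subdivision of each $A_k$ into small balls on which $\phi_k$ is an embedding (which is possible, outside $\SO$, wherever $d\phi_k=d\phi$ has rank $p$), each corresponding image $\Si_k$ is a genuine $p$-dimensional $C^1$ submanifold, to which Theorem~\ref{theo:neglig} applies and yields that its lower-degree set is $\Haus^{D(p)}$-negligible. Consequently $\SC$ is contained, up to a set which is $\Haus^p_{|\cdot|}$-null and therefore $\Haus^{D(p)}$-null by \cite[Proposition~3.1]{BTW}, in the countable union of the lower-degree sets of the $\Si_k$; countable subadditivity gives $\Haus^{D(p)}(\SC)=0$, and combining with the bound for $\SO$ proves \eqref{eqneglLip}.

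The main obstacle is not the covering arithmetic but the measure-theoretic bookkeeping that transports the pointwise degree from $\Si$ to the $C^1$ pieces: one must verify that, off an $\Haus^p_{|\cdot|}$-null (hence $\Haus^{D(p)}$-null) set, the differential of the Lipschitz parametrization, which exists $\Haus^p_{|\cdot|}$-a.e.\ by Rademacher, agrees with the classical differential of the $C^1$ approximant, so that tangent planes, and thus degrees, match. The fact that the degree is determined solely by the tangent plane through Lemma~\ref{goodtangentvectors} is what makes this transfer clean; the only remaining care is to extract honest $C^1$ submanifolds from the approximating maps where they are immersions, so that Theorem~\ref{theo:neglig} is legitimately applicable.
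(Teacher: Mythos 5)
Your proposal is correct and follows essentially the same route as the paper: a Lusin--Whitney $C^1$ approximation reduces the statement to Theorem~\ref{theo:neglig}, the leftover exceptional set is killed by the Euclidean-to-CC dimension comparison of \cite[Proposition 3.1]{BTW}, and $\SO$ is handled by Proposition~\ref{prop:negligibilitySigma0}. The only difference is bookkeeping: the paper works with local Lipschitz \emph{graph} charts, so Whitney's extension theorem directly yields a set $E_\ep$ on which both values and gradients agree and the $C^1$ approximant's graph is automatically an embedded submanifold, which makes your Rademacher/density-point matching and the immersion-rank subdivision steps unnecessary.
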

\begin{proof}
By definition, $\Si$ is locally the graph of a Euclidean Lipschitz function, hence without loss of generality, we can assume that $\Si\subset\f(A)$, where $\f$ is the graph function given by a Lipschitz function $f:A\to V$,
$A\subset W$ is a bounded open set of $W$ and $\G$ is seens as by $W\times V$, where $W$ and $V$ are linear subspaces
of dimensions $p$ and $n-p$, respectively.
Let $\ep>0$ be arbitrarily fixed such that $0<\ep<{\mathcal L}^p(A)$. 
By the classical Whitney's extension theorem, there exists a $C^1$ function $f_\ep:A\to V$ such that the set
\begin{equation}\label{eqbla3}
E_\ep:=\{z\in A: f_\ep(z)=f(z)\text{ and }\nabla f_\ep(z)=\nabla f(z)\}
\end{equation}
satisfies $\mathcal L^p(A\setminus E_\ep)<\ep$. 
The graph function $\f_\ep$ associated to $f_\ep$ defines the $C^1$ submanifold $\Si^\ep:=\f_\ep(A)$,
hence Theorem~\ref{theo:neglig} implies that that its generalized characteristic set $\Si_c^\ep:=\{x\in\Si^\ep:d_{\Si^\ep}(x)<D(p)\}$ is $\Haus^{D(p)}$-negligible. By the conditions of 
\eqref{eqbla3}, we have the inclusion $\SC\cap \f(E_\ep)\subset \Si_c^\ep$, hence 
$\SC\cap \f(E_\ep)$ is also $\Haus^{D(p)}$-negligible.
As a consequence of \cite[Proposition 3.1]{BTW}, there exists a geometric constant $C>0$,
only depending on the diameter of $\f(A)$ and on $\G$, such that
\[
\Haus^{D(p)}(\SC)=\Haus^{D(p)}(\SC\setminus\f(E_\ep))\leq C\,\Haus_{|\cdot|}^{p}(\SC\setminus \f(E_\ep))
\leq C\,L^p\,\ep\,,
\]
where $L>0$ is the Euclidean Lipschitz constant of $\f$. The arbitrary choice of $\ep$ implies that
$\Haus^{D(p)}(\SC)=0$ and using \eqref{eq1}, the proof is accomplished.
\end{proof}

%
%
%
%
%
\section{Size of the characteristic set for \texorpdfstring{$C^{1,\la}$}{smoother} submanifolds}\label{sec:Hausdorffdim}
%
%
%
%
%

In this section we assume that $\Si$ is a submanifold of class $C^{1,\la}$ for some $\la\in (0,1]$.
Our aim is to refine Theorem \ref{theo:neglig} and obtain estimates on the Hausdorff dimension of the characteristic set $\SC$. We first assume that $\Si\subset\f([0,1]^p)$ for some  map $\f\in C^{1,\la}([0,1]^p,\G)$. Under this assumption, there exists $C=C(\Si)>0$ such that
\begin{equation}\label{eq5refined}
|\langle y,w\rangle|\leq Cr^{1+\la}\qquad
\begin{array}{l}
\forall x\in\Si,\ \forall y\in (x^{-1}\cdot\Si) \cap B_E(0,r),\\
\forall w\in \left( T_0(x^{-1}\cdot\Si) \right)^\perp,\ |w|=1\,.
\end{array}
\end{equation}
In other words, the number $\bar r_\ep$ defined by \eqref{eq5} can be chosen to be $\bar r_\ep=(\ep/C)^{1/\la}$.

As in \eqref{unionAB}, we write $\SC=\SCA\cup\SCB$ where, following \eqref{defSCABr}, we define
\[
\begin{split}
& \SCA=\{x\in \SC:\exists\; \bj\geq \ell+1\text{ such that }\al_\bj<n_\bj\}\\
& \SCB=\{x\in \SC:\al_j=n_j\ \forall j\geq\ell+1\text{ and }\al_\ell<r_p\}\,.
\end{split}
\]
Again, if $\ell=1$, then $\SCB=\emptyset$.

\begin{Lem}\label{lemsizeA}
Let $\Sigma\subset\G$ be a $C^{1,\la}$ submanifold such that $\Sigma\subset\f([0,1]^p)$  for some map $\f\in C^{1,\la}([0,1]^p,\G)$. Then
\begin{equation}\label{eq:sizeA}
\begin{array}{ll}
\dim_H \SCA \leq D(p)-1 & \text{if }\la\geq 1/\ell\\
\dim_H \SCA \leq D(p)-\ell\la & \text{if }\la\leq 1/\ell\,.
\end{array}
\end{equation}
\end{Lem}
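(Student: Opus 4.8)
The plan is to exploit the quantitative control \eqref{eq5refined}, which in the $C^{1,\la}$ setting allows the choice $\bar r_\ep=(\ep/C)^{1/\la}$, in order to eliminate the free parameter $\ep$ from the covering estimate \eqref{conseqclaim1} and then invoke Proposition~\ref{prop:esponenti}. Recall that \eqref{conseqclaim1}, which applies here since $C^{1,\la}\subset C^1$, states that for every fixed $\ep\in(0,1)$ and every $r\in(0,\min\{\bar r_\ep,\ep^\ell\}]$ the set $\SCA$ is covered by at most $C_AC_{14}\,\ep\,r^{-D(p)/\ell}$ balls of radius $r^{1/\ell}$. The idea is to let $r\to0^+$ while coupling $\ep$ to $r$ so that the two admissibility constraints $r\leq\bar r_\ep$ and $r\leq\ep^\ell$ are both saturated; this turns the two-parameter bound into a one-parameter covering to which Proposition~\ref{prop:esponenti} applies directly.

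Concretely, I would set
\[
\ep=\ep(r):=\max\{C\,r^\la,\,r^{1/\ell}\}\,,
\]
with $C$ the constant of \eqref{eq5refined}. For $r$ small this lies in $(0,1)$, and by construction $\ep\geq C\,r^\la$ yields $r\leq\bar r_\ep=(\ep/C)^{1/\la}$, while $\ep\geq r^{1/\ell}$ yields $r\leq\ep^\ell$; hence \eqref{conseqclaim1} is applicable for this pairing. The behaviour of $\ep(r)$ depends on the sign of $1/\ell-\la$: when $\la\leq 1/\ell$ the first term dominates as $r\to0^+$, so $\ep(r)\simeq C\,r^\la$, whereas when $\la\geq 1/\ell$ the second term dominates and $\ep(r)=r^{1/\ell}$ up to a constant at the borderline $\la=1/\ell$.

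It then remains to substitute and read off the exponent in the variable $\rho:=r^{1/\ell}$, which is the actual radius of the covering balls, so that $r=\rho^\ell$. In the regime $\la\leq 1/\ell$ the cardinality is bounded by a constant times $r^{\la}\,r^{-D(p)/\ell}=\rho^{\,\ell\la-D(p)}$, and Proposition~\ref{prop:esponenti} (with $\beta=1$) yields $\dim_H\SCA\leq D(p)-\ell\la$. In the regime $\la\geq 1/\ell$ the cardinality is bounded by a constant times $r^{1/\ell}\,r^{-D(p)/\ell}=\rho^{\,1-D(p)}$, giving $\dim_H\SCA\leq D(p)-1$; together these prove \eqref{eq:sizeA}. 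The main point to get right is the optimal coupling $\ep(r)$ and the ensuing case split: one must verify that this $\ep(r)$ genuinely saturates \emph{both} constraints in \eqref{conseqclaim1}, and handle the borderline $\la=1/\ell$ — where the two bounds $D(p)-\ell\la$ and $D(p)-1$ coincide — by absorbing the extra constant factor $C$ into the covering count.
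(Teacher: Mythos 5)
Your proof is correct and follows essentially the same route as the paper: the paper fixes $\ep$ and takes $r=\min\{\bar r_\ep,\ep^\ell\}$ in \eqref{conseqclaim1}, which in the two regimes $\la\geq 1/\ell$ and $\la\leq 1/\ell$ gives exactly your coupling $\ep=r^{1/\ell}$ and $\ep=C\,r^\la$ read in the inverse direction, followed by the same application of Proposition~\ref{prop:esponenti}. Your reparametrization in terms of $r$ (with $\ep(r)=\max\{C r^\la,r^{1/\ell}\}$) and the absorption of constants at the borderline $\la=1/\ell$ are sound and change nothing of substance.
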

\begin{proof}
If $\la> 1/\ell$ we have
\[
\min\{\bar r_\ep,\ep^\ell\} = \min\{(\ep/C)^{1/\la},\ep^\ell\}=\ep^\ell
\]
for any $\ep>0$ small enough. We are then allowed to use \eqref{conseqclaim1} with $r:=\ep^\ell$ and obtain that, for any $\ep>0$ small enough, the set $\SCA$ can be covered by a family of balls with radius $\ep$ of cardinality at most $C_AC_{14}\ep\,\ep^{-D(p)}$. By Proposition \ref{prop:esponenti} we get
\[
\dim_H\SCA \leq D(p)-1\,.
\]

On the other hand, if $\la\leq 1/\ell$ we have
\[
\min\{\bar r_\ep,\ep^\ell\} = \min\{(\ep/C)^{1/\la},\ep^\ell\}=C_{15}\ep^{1/\la}
\]
for any $\ep>0$ small enough; we have utilized the usual convention on constants $C_i$. Using \eqref{conseqclaim1} with $r:=C_{15}\ep^{1/\la}$, we get that, for any $\ep>0$ small enough, the set $\SCA$ can be covered by a family of balls with radius $r^{1/\ell}=C_{16}\ep^{1/(\ell\la)}$ of cardinality at most $C_{17}\ep\,\ep^{-D(p)/(\ell\la)}$. By Proposition \ref{prop:esponenti} we get
\[
\dim_H\SCA \leq D(p)-\ell\la
\]
and this concludes the proof.
\end{proof}

\begin{Lem}\label{lemsizeB}
Let $\Sigma\subset\G$ be a $C^{1,\la}$ submanifold such that $\Sigma\subset\f([0,1]^p)$  for some map $\f\in C^{1,\la}([0,1]^p,\G)$; assume $\ell\geq 2$. Then
\begin{equation}\label{eq:sizeB}
\begin{array}{ll}
\dim_H \SCB \leq D(p)-1 & \text{if }\la\geq \frac{1}{\ell-1}\vspace{.1cm}\\
\dim_H \SCB \leq D(p)-\frac{\ell\la}{1+\la} & \text{if }\la\leq \frac{1}{\ell-1}\,.
\end{array}
\end{equation}
\end{Lem}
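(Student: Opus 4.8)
The plan is to follow the proof of Lemma~\ref{lemsizeA} verbatim, simply replacing the covering estimate \eqref{conseqclaim1} for $\SCA$ by its counterpart \eqref{conseqclaim2} for $\SCB$. As in Lemma~\ref{lemsizeA}, the $C^{1,\la}$ regularity enters only through the refined estimate \eqref{eq5refined}, which permits the choice $\bar r_\ep=(\ep/C)^{1/\la}$ in \eqref{eq5}. The entire argument then reduces to selecting $r$ as a function of $\ep$ so that \eqref{conseqclaim2} produces a covering of $\SCB$ by balls of radius $\ep^\beta$ and cardinality at most $C\ep^{-q}$, after which Proposition~\ref{prop:esponenti} gives $\dim_H\SCB\leq q/\beta$.

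First I would distinguish the two regimes in the statement according to whether $\la\geq\frac1{\ell-1}$ or $\la\leq\frac1{\ell-1}$, and compute in each case the value of $\min\{\bar r_\ep,\ep^{\ell-1}\}$ occurring in \eqref{conseqclaim2}. If $\la\geq\frac1{\ell-1}$, then $1/\la\leq\ell-1$, so for $\ep$ small the minimum equals $\ep^{\ell-1}$ up to a constant; applying \eqref{conseqclaim2} with $r\asymp\ep^{\ell-1}$ yields balls of radius $\ep^{1/\ell}r^{1/\ell}\asymp\ep$ and cardinality at most $C\,\ep^{1-D(p)/\ell}r^{-D(p)/\ell}\asymp C\,\ep^{1-D(p)}$. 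Proposition~\ref{prop:esponenti}, with $\beta=1$ and $q=D(p)-1$, then gives $\dim_H\SCB\leq D(p)-1$.

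If instead $\la\leq\frac1{\ell-1}$, then $1/\la\geq\ell-1$ and for $\ep$ small the minimum equals $\bar r_\ep=C_{15}\ep^{1/\la}$; I would apply \eqref{conseqclaim2} with $r=C_{15}\ep^{1/\la}$. A direct computation shows the balls then have radius $\ep^{1/\ell}r^{1/\ell}=C_{16}\,\ep^{(1+\la)/(\ell\la)}$, so that $\beta=\frac{1+\la}{\ell\la}$, while their cardinality is bounded by $C\,\ep^{-q}$ with $q=\frac{D(p)}{\ell}\big(1+\frac1\la\big)-1=\frac{D(p)(1+\la)}{\ell\la}-1$. Proposition~\ref{prop:esponenti} then gives
\[
\dim_H\SCB\leq\frac q\beta=D(p)-\frac{\ell\la}{1+\la},
\]
which is the second estimate in \eqref{eq:sizeB}.

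There is no real obstacle here beyond the bookkeeping of exponents, since the substantive work is already contained in the covering estimate \eqref{conseqclaim2} (itself extracted from Lemma~\ref{lemma3}). The only points requiring care are the correct identification of $\min\{\bar r_\ep,\ep^{\ell-1}\}$ in each regime and the tracking of the additional factor $\ep^{1/\ell}$ in the ball radius $\ep^{1/\ell}r^{1/\ell}$ of \eqref{conseqclaim2}, which is precisely what distinguishes this computation from that of Lemma~\ref{lemsizeA} and accounts for the appearance of $\frac{\ell\la}{1+\la}$ rather than $\ell\la$ in the exponent gap.
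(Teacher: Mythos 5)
Your proposal is correct and follows essentially the same route as the paper's own proof: the same two regimes determined by $\min\{\bar r_\ep,\ep^{\ell-1}\}$, the same choices $r\asymp\ep^{\ell-1}$ and $r\asymp\ep^{1/\la}$ in the covering estimate \eqref{conseqclaim2}, and the same application of Proposition~\ref{prop:esponenti}, with identical exponent arithmetic (note that the two bounds agree at $\la=\tfrac{1}{\ell-1}$, so handling the boundary case in either regime is harmless).
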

\begin{proof}
If $\la> 1/(\ell-1)$ we have
\[
\min\{\bar r_\ep,\ep^{\ell-1}\} = \min\{(\ep/C)^{1/\la},\ep^{\ell-1}\}=\ep^{\ell-1}
\]
for any $\ep>0$ small enough. We are then allowed to use \eqref{conseqclaim2} with $r:=\ep^{\ell-1}$ and obtain that, for any $\ep>0$ small enough, the set $\SCB$ can be covered by a family of balls with radius $\ep^{1/\ell}\ep^{(\ell-1)/\ell}=\ep$ of cardinality at most
\[
C_BC_{14}\,\ep\,\ep^{-D(p)/\ell}r^{-D(p)/\ell} = C_BC_{14}\, \ep^{-D(p)+1}.
\]
By Proposition \ref{prop:esponenti} we get
\[
\dim_H\SCB \leq D(p)-1\,.
\]

On the other hand, if $\la\leq 1/(\ell-1)$ we have
\[
\min\{\bar r_\ep,\ep^{\ell-1}\} = \min\{(\ep/C)^{1/\la},\ep^{\ell-1}\}=C_{18}\ep^{1/\la}
\]
for any $\ep>0$ small enough. Using \eqref{conseqclaim2} with $r:=C_{18}\ep^{1/\la}$, we get that, for any $\ep>0$ small enough, the set $\SCB$ can be covered by a family of balls with radius $\ep^{1/\ell}r^{1/\ell}=C_{17}\ep^{(\la+1)/(\ell\la)}$ of cardinality at most
\[
C_BC_{14}\ep\,\ep^{-D(p)/\ell}\,r^{-D(p)/\ell} = C_BC_{14}\ep\,\ep^{-\frac{D(p)}{\ell}}\,\ep^{-\frac{D(p)}{\ell\la}} = C_BC_{14}\ep^{-\frac{\la+1}{\ell\la}D(p)+1}\,.
\]
By Proposition \ref{prop:esponenti} we get
\[
\dim_H\SCB \leq D(p)-\tfrac{\ell\la}{\la+1}
\]
and this concludes the proof.
\end{proof}

Recalling that $\SCB=\emptyset$ if $\ell=1$, Lemmata \ref{lemsizeA} and \ref{lemsizeB} immediately lead to the following result.

\begin{The}\label{thmsizecharset}
Let $\Si$ be a $p$-dimensional submanifold of $\G$ of class $C^{1,\la}$, $\la\in(0,1]$.
It follows that
\begin{eqnarray}\label{dim_HH}
&&\left\{\begin{array}{ll}
\dim_H \SC \leq D(p)-\la & \text{if }\ell=\ell(p)=1\vspace{.1cm}\\
\dim_H \SC \leq D(p)-1 & \text{if }\ell\geq 2\text{ and }\la\geq \tfrac{1}{\ell-1}\vspace{.1cm}\\
\dim_H \SC \leq D(p)-\tfrac{\ell\la}{1+\la}\quad & \text{if }\ell\geq 2\text{ and }\la\leq \tfrac{1}{\ell-1}
\end{array}\right.\,.
\end{eqnarray}
\end{The}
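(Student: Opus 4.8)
The plan is to deduce the theorem directly from Lemma~\ref{lemsizeA} and Lemma~\ref{lemsizeB}, together with the local reduction already used in the proof of Theorem~\ref{theo:neglig}. First I would reduce to the case $\Si\subset\f([0,1]^p)$ for some $\f\in C^{1,\la}([0,1]^p,\G)$ by a standard localization argument: since Hausdorff dimension is a local and countably stable invariant, covering $\Si$ by countably many pieces each admitting such a global parametrization loses nothing. This reduction is precisely what allows the refined transversality estimate \eqref{eq5refined} and hence the two covering lemmata to be invoked.

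Next I would use the decomposition $\SC=\SCA\cup\SCB$ recorded just before Lemma~\ref{lemsizeA}, recalling that $\SCB=\emptyset$ when $\ell=1$. Because the Hausdorff dimension of a finite union equals the maximum of the dimensions of the pieces, it suffices to bound $\dim_H\SCA$ and $\dim_H\SCB$ separately by means of Lemmata~\ref{lemsizeA} and~\ref{lemsizeB}, and then to take the larger of the two resulting bounds in each regime of $\ell$ and $\la$.

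The remaining work is purely the case analysis. For $\ell=1$ one has $\SC=\SCA$ and $\la\leq 1=1/\ell$, so the second alternative of Lemma~\ref{lemsizeA} gives $\dim_H\SC\leq D(p)-\ell\la=D(p)-\la$. For $\ell\geq2$ I would split according to whether $\la\geq\tfrac{1}{\ell-1}$ or $\la\leq\tfrac{1}{\ell-1}$. The key elementary observation is that $\tfrac{1}{\ell-1}>\tfrac{1}{\ell}$, so the regime $\la\geq\tfrac{1}{\ell-1}$ lies inside the branch $\la\geq\tfrac{1}{\ell}$ of Lemma~\ref{lemsizeA}; hence both lemmata yield the bound $D(p)-1$ and the maximum is $D(p)-1$. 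In the remaining regime $\la\leq\tfrac{1}{\ell-1}$, Lemma~\ref{lemsizeB} gives $\dim_H\SCB\leq D(p)-\tfrac{\ell\la}{1+\la}$, and it only remains to verify that the $\SCA$-bound never exceeds this: when $\la\leq\tfrac1\ell$ one uses $\tfrac{\ell\la}{1+\la}\leq\ell\la$, and when $\tfrac1\ell\leq\la\leq\tfrac{1}{\ell-1}$ one uses the equivalence $\tfrac{\ell\la}{1+\la}\leq1\Longleftrightarrow\la\leq\tfrac{1}{\ell-1}$. In both subcases the $\SCA$-bound is dominated, so $\dim_H\SC\leq D(p)-\tfrac{\ell\la}{1+\la}$, as claimed.

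I expect no substantive obstacle in this argument. All the analytic content -- the $C^{1,\la}$ sharpening of the transversality estimate giving $\bar r_\ep=(\ep/C)^{1/\la}$, and the covering counts \eqref{conseqclaim1} and \eqref{conseqclaim2} fed into Proposition~\ref{prop:esponenti} -- has already been carried out in Lemmata~\ref{lemsizeA} and~\ref{lemsizeB}. The proof of the theorem therefore amounts to assembling these two bounds and checking the two elementary monotonicity inequalities above; the ``hard part'' has been front-loaded into the covering lemmata rather than left to the theorem itself.
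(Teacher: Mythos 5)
Your proposal is correct and follows essentially the same route as the paper, which proves the theorem by simply combining Lemmata~\ref{lemsizeA} and~\ref{lemsizeB} with the decomposition $\SC=\SCA\cup\SCB$ (and the fact that $\SCB=\emptyset$ when $\ell=1$). The case analysis and monotonicity checks you spell out, together with the localization making the lemmata applicable, are exactly the content the paper compresses into ``immediately lead to the following result.''
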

\begin{rem}\label{final-remark}{\rm
It is interesting to analyze Theorem \ref{thmsizecharset} when the Carnot group $\G$ is the Heisenberg group $\H^n$. 
In this case, $\ell=\ell(p)=1$ for all $p=2,\dots,2n$ and Theorem \ref{thmsizecharset} reads as
\begin{equation}\label{Heisenberg}
\dim_H \SC \leq p+1-\la
\end{equation}
for any $p$-dimensional submanifold $\Si\subset\H^n$ of class $C^{1,\la}$. These estimates coincide with the results stated in Remark~1, page 72 of \cite{Bal03}. In the special case $p=1$, we have $\ell=2$, hence
Theorem~\ref{thmsizecharset} gives
\begin{equation}\label{Heisenbergp=1}
\dim_H \SC \leq D(1)-\frac{2\la}{1+\la}=\frac{2}{1+\lambda}\leq 2-\la\,,
\end{equation}
where the last inequality is strict for all $\la\in(0,1)$ and $2-\la=p+1-\la$.
Thus, in this special case of curves ($p=1$), the estimates \eqref{dim_HH} improve that of 
Remark~1, page 72 of \cite{Bal03}. 
}\end{rem}

\end{document}